\newtheorem{theorem}{Theorem}[section]
\newtheorem{lemma}[theorem]{Lemma}
\theoremstyle{definition}
\newtheorem{definition}[theorem]{Definition}
\newtheorem{example}[theorem]{Example}
\newtheorem{corollary}[theorem]{Corollary}
\newtheorem {obs} [theorem]{Observation}
\theoremstyle{remark}
\newtheorem{remark}[theorem]{Remark}
\numberwithin{equation}{section}
\begin{document}

\title{Ideals in $B_1(X)$ and residue class rings of $B_1(X)$ modulo an ideal}
\author{A. Deb Ray}
\address{Department of Pure Mathematics, University of Calcutta, 35, Ballygunge Circular Road, Kolkata - 700019, INDIA} 
\email{debrayatasi@gmail.com}
\author{Atanu Mondal}
\thanks{The second author is supported by Council of Scientific and Industrial Research, HRDG, India. Sanction No.- 09/028(0998)/2017-EMR-1}

\address{Department of Pure Mathematics, University of Calcutta, 35, Ballygunge Circular Road, Kolkata - 700019, INDIA}
\email{mail.atanu12@yahoo.com}

\begin{abstract}
This paper explores the duality between ideals of the ring $B_1(X)$ of all real valued Baire one functions on a topological space $X$ and typical families of zero sets, called $Z_B$-filters, on $X$. As a natural outcome of this study, it is observed that $B_1(X)$ is a Gelfand ring but non-Noetherian in general. Introducing fixed and free maximal ideals in the context of $B_1(X)$, complete descriptions of the fixed maximal ideals of both $B_1(X)$ and $B_1^*(X)$ are obtained. Though free maximal ideals of $B_1(X)$ and those of $B_1^*(X)$ do not show any relationship in general, their counterparts, i.e., the fixed maximal ideals obey natural relations. It is proved here that for a perfectly normal $T_1$ space $X$, free maximal ideals of $B_1(X)$ are determined by a typical class of Baire one functions. In the concluding part of this paper, we study residue class ring of $B_1(X)$ modulo an ideal, with special emphasize on real and hyper real maximal ideals of $B_1(X)$.
\end{abstract}

\keywords{$Z_B$- filter, $Z_B$-ultrafilter, $Z_B$-ideal, fixed ideal, free ideal, residue class ring, real maximal ideal, hyper real maximal ideal}
\subjclass[2010]{26A21, 54C30, 13A15, 54C50}

\maketitle
\section{Introduction}
\noindent In \cite{AA}, we have introduced the ring of Baire one functions defined on any topological space $X$ and have denoted it by $B_1(X)$. It has been observed that $B_1(X)$ is a commutative lattice ordered ring with unity containing the ring $C(X)$ of continuous functions as a subring. The collection of bounded Baire one functions, denoted by $B_1^*(X)$, is a commutative subring and sublattice of $B_1(X)$. Certainly, $B_1^*(X) \cap C(X) = C^*(X)$.\\\\
\noindent In this paper, we study the ideals, in particular, the maximal ideals of $B_1(X)$ (and also of $B_1^*(X)$). There is a nice interplay between the ideals of $B_1(X)$ and a typical family of zero sets (which we call a $Z_B$-filter) of the underlying space $X$. As a natural consequence of this duality of 
ideals of $B_1(X)$ and $Z_B$-filters on $X$, we obtain that $B_1(X)$ is Gelfand and in general, $B_1(X)$ is non-Noetherian.\\\\
\noindent Introducing the idea of fixed and free ideals in our context, we have characterized the fixed maximal ideals of $B_1(X)$ and also those of $B_1^*(X)$. We have shown that although fixed maximal ideals of the rings $B_1(X)$ and $B_1^*(X)$ obey a natural relationship, the free maximal ideals fail to do so. However, for a perfectly normal $T_1$ space $X$, free maximal ideals of $B_1(X)$ are determined by a typical class of Baire one functions.\\\\
\noindent In the last section of this paper, we discuss residue class ring of $B_1(X)$ modulo an ideal and introduce real and hyper-real maximal ideals in $B_1(X)$.

\section{$Z_B$-filters on $X$ and Ideals in $B_1(X)$}
\begin{definition}
	A nonempty subcollection $\mathscr{F}$ of $Z(B_1(X))$ is said to be a $Z_B$-filter on $X$, if it satisfies the following conditions:
	\begin{enumerate} 
		\item $\emptyset \notin \mathscr{F}$
		\item if $Z_1, Z_2 \in \mathscr{F}$, then $Z_1 \cap Z_2 \in \mathscr{F}$
		\item if $Z \in \mathscr F$ and $Z' \in Z(B_1(X))$ is such that $Z \subseteq Z'$, then $Z' \in \mathscr F$.
	\end{enumerate}
\end{definition}
\noindent Clearly, a $Z_B$-filter $\mathscr F$ on $X$ has finite intersection property. Conversely, if a subcollection $\mathscr B \subseteq Z(B_1(X))$ possesses finite intersection property, then $\mathscr B$ can be extended to a $Z_B$-filter $\mathscr F(\mathscr B)$ on $X$, given by $\mathscr F(\mathscr B)= \{Z \in Z(B_1(X))$: there exists a finite subfamily $\{B_1,B_2,...,B_n\}$ of $\mathscr B$ with $Z \supseteq \bigcap\limits_{i=1}^n B_i$\}. Indeed this is the smallest $Z_B$-filter on $X$ containing $\mathscr B$.
\begin{definition}
	A $Z_B$-filter $\mathscr U$ on $X$ is called a $Z_B$-ultrafilter on $X$, if there does not exist any $Z_B$-filter $\mathscr F$ on $X$, such that $\mathscr U \subsetneqq \mathscr F$.
\end{definition}
\begin{example}
	Let, $A_0=\{Z \in Z(B_1(\mathbb{R})): 0 \in Z\}$. Then $A_0$ is a $Z_B$-ultrafilter on $\mathbb{R}$.
\end{example}
\noindent Applying Zorn's lemma one can show that, every $Z_B$-filter on $X$ can be extended to a $Z_B$-ultrafilter.
Therefore, a family $\mathscr B$ of $Z(B_1(X))$ with finite intersection property can be extended to a $Z_B$-ultrafilter on $X$.
\begin{remark}
	A $Z_B$-ultrafilter $\mathscr U$ on $X$ is a subfamily of $Z(B_1(X))$ which is maximal with respect to having finite intersection property. Conversely, if a family $\mathscr B$ of $Z(B_1(X))$ has finite intersection property and maximal with respect to having this property, then $\mathscr B$ is a $Z_B$-ultrafilter on $X$.
\end{remark}
\noindent In what follow, by an ideal $I$ of $B_1(X)$ we always mean a proper ideal.
\begin{theorem}\label{p2thm2.5}
	If $I$ is an ideal in $B_1(X)$, then $Z_B[I]=\{Z(f):f \in I\}$ is a $Z_B$-filter on $X$.
\end{theorem}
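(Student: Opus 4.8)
The plan is to verify the three defining conditions of a $Z_B$-filter one at a time, using nothing more than the ring structure of $B_1(X)$ together with the two elementary zero-set identities $Z(fg) = Z(f)\cup Z(g)$ and $Z(f^2+g^2) = Z(f)\cap Z(g)$, both of which follow immediately from the pointwise definition of a zero set (a product of reals vanishes iff one factor does; a sum of squares of reals vanishes iff both do). Since $0\in I$ and $Z(0)=X$, the family $Z_B[I]$ is automatically a nonempty subcollection of $Z(B_1(X))$, so only conditions (1)--(3) of the definition need checking.

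First I would dispose of condition (1), that $\emptyset\notin Z_B[I]$: this is where properness of $I$ is used. If some $f\in I$ had $Z(f)=\emptyset$, then $f$ would be a unit of $B_1(X)$ --- its pointwise reciprocal $1/f$ is again a Baire one function, a fact recorded in \cite{AA} --- and a proper ideal cannot contain a unit, a contradiction. For condition (2), given $Z(f),Z(g)\in Z_B[I]$ with $f,g\in I$, the element $f^2+g^2 = f\cdot f + g\cdot g$ again lies in $I$ and in $B_1(X)$, and by the identity above $Z(f^2+g^2)=Z(f)\cap Z(g)$, so the intersection lies in $Z_B[I]$. For condition (3), if $Z(f)\in Z_B[I]$ with $f\in I$ and $Z(f)\subseteq Z'=Z(g)$ for some $g\in B_1(X)$, then $fg\in I$ (ideals absorb multiplication) and $Z(fg)=Z(f)\cup Z(g)=Z'$ because $Z(f)\subseteq Z'$; hence $Z'\in Z_B[I]$.

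The only step here that is not a purely formal translation of the ideal axioms into the language of zero sets is the assertion, used in condition (1), that a nowhere-vanishing Baire one function is invertible in $B_1(X)$; the rest is routine. I would therefore be careful to invoke (or, if one prefers a self-contained account, briefly reprove) the relevant result from \cite{AA} precisely at that point, and otherwise simply state the two zero-set identities once and apply them in the remaining two steps.
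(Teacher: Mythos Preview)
Your proof is correct and follows essentially the same approach as the paper: verify the three $Z_B$-filter axioms using $Z(f^2+g^2)=Z(f)\cap Z(g)$ for closure under intersection and $Z(fg)=Z(f)\cup Z(g)$ for upward closure, and invoke the invertibility of nowhere-vanishing Baire one functions from \cite{AA} to rule out $\emptyset$. The only cosmetic difference is that the paper first passes to $f^2$ before taking the reciprocal in step~(1), whereas you invert $f$ directly; both are fine.
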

\begin{proof}
	Since $I$ is a proper ideal in $B_1(X)$, we claim $\emptyset \notin Z_B[I]$. If possible let $\emptyset \in Z_B[I]$. So, $\emptyset = Z(f)$, for some $f \in I$. As $f \in I \implies f^2 \in I$ and $Z(f^2)=Z(f)=\emptyset$, hence $\frac{1}{f^2} \in B_1(X)$ \cite{AA}. This is a contradiction to the fact that, $I$ is a proper ideal and contains no unit.\\
	Let $Z(f),Z(g) \in Z_B[I]$, for some $f,g \in I.$ Our claim is $Z(f) \cap Z(g) \in Z_B[I]$. $Z(f) \cap Z(g)=Z(f^2+g^2) \in Z_B[I]$, as $I$ is an ideal and so, $f^2+g^2 \in I$.\\
	Now assume that $Z(f)\in Z_B[I]$ and $Z' \in Z(B_1(X))$ is such that $Z(f) \subseteq Z'$. Then we can write $Z'=Z(h)$, for some $h \in B_1(X)$. $Z(f)\subseteq Z' \implies Z(h)=Z(h)\cup Z(f)$. So, $Z(h)=Z(hf)\in Z_B[I]$, because $hf \in I.$ Hence, $Z_B[I]$ is a $Z_B$-filter on $X$.
\end{proof}
\begin{theorem}\label{p2thm2.6}
	Let $\mathscr F$ be a $Z_B$-filter on $X$. Then $Z_B^{-1}[\mathscr F]= \{f \in B_1(X):Z(f) \in \mathscr F\}$ is an ideal in $B_1(X)$.
\end{theorem}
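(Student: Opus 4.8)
The plan is to check directly the defining properties of a (proper) ideal for $Z_B^{-1}[\mathscr{F}]$: that it is a nonempty proper subset of $B_1(X)$, that it is closed under subtraction, and that it absorbs multiplication by arbitrary elements of $B_1(X)$. The two elementary set-theoretic inclusions I will use repeatedly are $Z(f)\cap Z(g)\subseteq Z(f-g)$ and $Z(f)\subseteq Z(hf)$, valid for all $f,g,h\in B_1(X)$, combined with the three closure axioms of a $Z_B$-filter from the definition.

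For nonemptiness and properness: the zero function satisfies $Z(0)=X$, and since $\mathscr{F}$ is nonempty we may pick $Z\in\mathscr{F}$ with $Z\subseteq X=Z(0)\in Z(B_1(X))$, so axiom (3) gives $X\in\mathscr{F}$ and hence $0\in Z_B^{-1}[\mathscr{F}]$. On the other hand, if $u\in B_1(X)$ is a unit then $\tfrac1u\in B_1(X)$, so $u$ vanishes nowhere and $Z(u)=\emptyset\notin\mathscr{F}$ by axiom (1); thus no unit belongs to $Z_B^{-1}[\mathscr{F}]$, and in particular it is a proper subset of $B_1(X)$.

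For the ideal conditions: given $f,g\in Z_B^{-1}[\mathscr{F}]$, axiom (2) gives $Z(f)\cap Z(g)\in\mathscr{F}$; since $f-g\in B_1(X)$ we have $Z(f-g)\in Z(B_1(X))$, and as $Z(f)\cap Z(g)\subseteq Z(f-g)$, axiom (3) yields $Z(f-g)\in\mathscr{F}$, i.e.\ $f-g\in Z_B^{-1}[\mathscr{F}]$. Similarly, given $f\in Z_B^{-1}[\mathscr{F}]$ and $h\in B_1(X)$, the product $hf$ lies in $B_1(X)$, so $Z(hf)\in Z(B_1(X))$, and from $Z(f)\subseteq Z(hf)$ together with $Z(f)\in\mathscr{F}$, axiom (3) gives $Z(hf)\in\mathscr{F}$, i.e.\ $hf\in Z_B^{-1}[\mathscr{F}]$. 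Combining the three steps shows that $Z_B^{-1}[\mathscr{F}]$ is a proper ideal of $B_1(X)$.

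There is no genuine obstacle here; the one point that deserves a moment's care is that the superset axiom (3) of a $Z_B$-filter is stated only for supersets that themselves lie in $Z(B_1(X))$, so each application of it must be preceded by the (immediate) observation that the relevant set --- $Z(f-g)$ or $Z(hf)$ --- is the zero-set of an element of $B_1(X)$. The whole argument is a mirror image of the proof of Theorem \ref{p2thm2.5}, with the roles of ideal and filter interchanged.
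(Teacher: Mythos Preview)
Your proof is correct and follows essentially the same approach as the paper: properness via $\emptyset\notin\mathscr{F}$, closure under subtraction via $Z(f)\cap Z(g)\subseteq Z(f-g)$ and axioms (2)--(3), and absorption via $Z(f)\subseteq Z(hf)$ and axiom (3). The only cosmetic differences are that the paper writes $Z(fh)=Z(f)\cup Z(h)$ rather than the inclusion, and checks properness via the single unit $\mathbf{1}$ rather than all units; your extra care in verifying nonemptiness and in noting that axiom~(3) requires the superset to lie in $Z(B_1(X))$ is a slight improvement in rigor.
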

\begin{proof}
	We note that, $\emptyset \notin \mathscr F$. So the constant function \textbf{1} $\notin Z_B^{-1}[\mathscr F]$. Hence $Z_B^{-1}[\mathscr F]$ is a proper subset of $B_1(X)$.\\
	Choose $f,g \in Z_B^{-1}[\mathscr F]$. Then $Z(f), Z(g) \in \mathscr F$ and $\mathscr F$ being a $Z_B$-filter $Z(f)  \cap  Z(g) \in \mathscr F$. Now $Z(f) \cap Z(g) \subseteq Z(f-g)$. Hence $Z(f-g) \in \mathscr F$, $\mathscr F$ being a $Z_B$-filter on $X$. This implies $f-g \in Z_B^{-1} [\mathscr F]$.\\
	For $f \in Z_B^{-1}[\mathscr F]$ and $h \in B_1(X)$, $Z(f.h)=Z(f) \cup Z(h)$. As $Z(f) \in \mathscr F$ and $\mathscr F$ is a $Z_B$-filter on $X$, it follows that $Z(f.h)$ $\in$ $\mathscr F$. Hence $f.h \in Z_B^{-1}[\mathscr F]$.\\ 
Thus $Z_B^{-1}[\mathscr F]$ is an ideal of $B_1(X)$.
\end{proof}
 
\noindent We may define a map $Z: B_1(X) \rightarrow Z(B_1(X))$ given by  $f \mapsto Z(f)$. Certainly, $Z$ is a surjection. In view of the above results, such $Z$ induces a map $Z_B$ between the collection of all ideals of $B_1(X)$, say $\mathscr I_B$ and the collection of all $Z_B$-filters on $X$, say $\mathscr F_B(X) $, i.e., $Z_B : \mathscr I_B \rightarrow \mathscr F_B(X)$ given by $Z_B(I)= Z_B[I]$, $\forall$ $I \in \mathscr I_B$.
The map $Z_B$ is also a surjective map because for any $\mathscr F \in \mathscr F_B(X)$, $Z_B^{-1}[\mathscr F]$ is an ideal in $B_1(X)$. We also note that $Z_B[Z_B^{-1}[\mathscr F]]= \mathscr F$. So each $Z_B$-filter on $X$ is the image of some ideal in $B_1(X)$ under the map $Z_B: \mathscr I_B \rightarrow \mathscr F_B(X)$.\\\\
\textbf{Observation.} The map $Z_B: \mathscr I_B \rightarrow \mathscr F_B(X)$ is not injective in general. Because, for any ideal $I$ in $B_1(X)$, $Z_B^{-1}[Z_B[I]]$ is an ideal in $B_1(X)$, such that $I \subseteq Z_B^{-1}[Z_B[I]]$ and by our previous result $Z_B[Z_B^{-1}[Z_B[I]]]=Z_B[I]$. If one gets an ideal $J$ in $B_1(X)$ such that $I \subseteq J \subseteq Z_B^{-1}[Z_B[I]]$, then we must have $Z_B[I]=Z_B[J]$. The following example shows that such an ideal is indeed possible to exist. In fact, in the following example, we get countably many ideals $I_n$ in $B_1(\mathbb{R})$ such that the images of all the ideals are same under the map $Z_B$.
\begin{example}
	Let $f_0: \mathbb{R} \rightarrow \mathbb{R}$ be defined as, \\ 
	\[ f_0(x)= \begin{cases} 
	\frac{1}{q} &$ if $x= \frac{p}{q}$, where $ p \in \mathbb{Z}, q \in \mathbb{N}$ and g.c.d. $(p,q)=1 \\
1 & $ if $x= 0 $ $\\
	
		0 & $ otherwise  $
	\end{cases}
	\]\\
It is well known that $f_0 \in B_1(\mathbb{R})$ (see \cite{JPF}). Consider the ideal $I$ generated by $f_0$, i.e., $I=\langle f_0 \rangle$. We claim that $f_0^\frac{1}{3} \notin I$. If possible, let $f_0^\frac{1}{3} \in I$. Then there exists $g \in B_1(\mathbb{R})$, such that $f_0^\frac{1}{3}=gf_0$. When $x=\frac{p}{q}$,  where $ p \in \mathbb{Z}, q \in \mathbb{N}$ and g.c.d $(p,q)=1$, $g(x)=q^\frac{2}{3}$. We show that such $g$ does not exist in $B_1(\mathbb{R})$.\\
	Let $\alpha$ be any irrational number in $\mathbb{R}$. We show that $g$ is not continuous at $\alpha$, no matter how we define $g(\alpha)$. Suppose $g(\alpha)= \beta$. There exists a sequence of rational numbers $\{ \frac{p_m}{q_m}\}$, such that $\{ \frac{p_m}{q_m}\}$ converges to $\alpha$ and $ p_m \in \mathbb{Z}, q_m \in \mathbb{N}$ with g.c.d $(p_m,q_m)=1$, $\forall$ $m \in \mathbb{N}$. If $g$ is continuous at $\alpha$ then $\{g( \frac{p_m}{q_m})\}$ converges to $g(\alpha)$, which implies that $q_m^\frac{2}{3}$ converges to $\beta$. But $q_m \in \mathbb{N}$, so $\{q_m^\frac{2}{3}\}$ must be eventually constant. Suppose there exists $n_0 \in \mathbb{N}$ such that $\forall$ $m \geq n_0$, $q_m$ is either $c$ or $-c$ or $q_m$ oscillates between $c$ and $-c$, for some natural number $c$, i.e., $\{\frac{p_m}{c}\}$ converges to $\alpha$ or $-\alpha$ or oscillates. In any case, $\{\frac{p_m}{q_m}\}$ cannot converges to $\alpha$. Hence we get a contradiction. So, $g$ is not continuous at any irrational point. It is well known that, if, $f \in B_1(X,Y)$, where $X$ is a Baire space, $Y$ is a metric space and $B_1(X,Y)$ stands for the collection of all Baire one functions from $X$ to $Y$ then the set of points where $f$ is continuous is dense in $X$ \cite{JRM}. Therefore, the set of points of $\mathbb{R}$ where $g$ is continuous is dense in $\mathbb{R}$ and is a subset of $\mathbb{Q}$. Hence it is a countable dense subset of $\mathbb{R}$ (Since $\mathbb{R}$ is a Baire space). But using Baire's category theorem it can be shown that, there exists no function $f:\mathbb{R} \rightarrow \mathbb{R}$, which is continuous precisely on a countable dense subset of $\mathbb{R}$. So, we arrive at a contradiction and no such $g$ exists. Hence $f_0^\frac{1}{3} \notin I$.\\
Observe that, $Z(f_0)=Z(f_0^{\frac{1}{3}})$ and $I \subseteq Z_B^{-1}[Z_B[I]]$. Again, $f_0^{\frac{1}{3}} \notin I $ but $f_0^{\frac{1}{3}} \in Z_B^{-1}Z_B[I] $, which implies $I \subsetneqq Z_B^{-1}[Z_B[I]]$. Hence by an earlier result $Z_B[I]=Z_B[Z_B^{-1}[Z_B[I]]]$, proving that the map $Z_B: \mathscr I_B \rightarrow \mathscr F_B(X)$ is not injective when $X=\mathbb{R}$.
\end{example}
\noindent \textbf{Observation:} $\langle f_0 \rangle \subsetneq \langle f_0^\frac{1}{3} \rangle$. Analogously, it can be shown that $\langle f_0 \rangle \subsetneq \langle f_0^\frac{1}{3} \rangle \subsetneq \langle f_0^\frac{1}{5} \rangle \subsetneq...\subsetneq \langle f_0^\frac{1}{2m+1}\rangle  \subsetneq...$ is a strictly increasing chain of proper ideals in $B_1(\mathbb{R})$. Hence $B_1(\mathbb{R})$ is not a Noetherian ring.
\begin{theorem}\label{p2thm2.8}
	If $M$ is a maximal ideal in $B_1(X)$ then $Z_B[M]$ is a $Z_B$-ultrafilter on $X$.
\end{theorem}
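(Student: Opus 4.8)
The plan is to show that $Z_B[M]$ is maximal among $Z_B$-filters by exploiting the duality established in Theorems~\ref{p2thm2.5} and~\ref{p2thm2.6}. First I would invoke Theorem~\ref{p2thm2.5} to note that $Z_B[M]$ is at least a $Z_B$-filter on $X$, so it remains only to prove maximality. To that end, suppose toward a contradiction that there is a $Z_B$-filter $\mathscr F$ on $X$ with $Z_B[M] \subsetneqq \mathscr F$; I want to produce a proper ideal of $B_1(X)$ strictly containing $M$, contradicting the maximality of $M$.

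The natural candidate is $J = Z_B^{-1}[\mathscr F] = \{ f \in B_1(X) : Z(f) \in \mathscr F\}$. By Theorem~\ref{p2thm2.6}, $J$ is a (proper) ideal of $B_1(X)$. Next I would check $M \subseteq J$: if $f \in M$ then $Z(f) \in Z_B[M] \subseteq \mathscr F$, so $f \in J$. Finally I would check that the inclusion is strict: pick $Z \in \mathscr F \setminus Z_B[M]$. Since $\mathscr F \subseteq Z(B_1(X))$, we may write $Z = Z(g)$ for some $g \in B_1(X)$, and then $g \in J$ because $Z(g) = Z \in \mathscr F$. But $g \notin M$, for otherwise $Z(g) \in Z_B[M]$, contrary to the choice of $Z$. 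Hence $M \subsetneqq J$ with $J$ proper, contradicting maximality of $M$. Therefore no such $\mathscr F$ exists and $Z_B[M]$ is a $Z_B$-ultrafilter.

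I expect the argument to be essentially routine, since all the machinery — that $Z_B[I]$ is a filter, that $Z_B^{-1}[\mathscr F]$ is a proper ideal, and that $Z$ is surjective onto $Z(B_1(X))$ — is already available from the preceding results. The only point requiring the tiniest care is the step ``$Z = Z(g)$ for some $g$,'' which is exactly the surjectivity of $Z : B_1(X) \to Z(B_1(X))$ noted just before Theorem~\ref{p2thm2.8}; and the observation that $g \notin M$, which is immediate from $Z(g) \notin Z_B[M]$. No genuine obstacle is anticipated; this is the standard $C(X)$-style correspondence argument transported to the Baire-one setting.
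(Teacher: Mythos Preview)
Your proof is correct and follows essentially the same route as the paper: both invoke Theorem~\ref{p2thm2.5} for the filter property, then pass a larger $Z_B$-filter $\mathscr F$ through $Z_B^{-1}$ to obtain a proper ideal containing $M$ and appeal to maximality. The only cosmetic difference is that the paper argues directly (any $\mathscr F \supseteq Z_B[M]$ gives $Z_B^{-1}[\mathscr F]=M$, whence $\mathscr F=Z_B[Z_B^{-1}[\mathscr F]]=Z_B[M]$), whereas you frame it as a contradiction and exhibit an explicit witness $g$ to the strict inclusion $M\subsetneqq J$.
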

\begin{proof}
	By Theorem \ref{p2thm2.5}, $Z_B[M]$ is a $Z_B$-filter on $X$. Let $\mathscr F$ be a $Z_B$-filter on $X$ such that, $Z_B[M]\subseteq \mathscr F$. Then $M \subseteq Z_B^{-1}[Z_B[M]]\subseteq Z_B^{-1}[\mathscr F]$. $Z_B^{-1}[\mathscr F]$ being a proper ideal and $M$ being a maximal ideal, we have $Z_B^{-1}[\mathscr F]=M \implies Z_B[M]=Z_B[Z_B^{-1}[\mathscr F]]= \mathscr F$. Hence every $Z_B$-filter that contains $Z_B[M]$ must be equal to $Z_B[M]$. This shows $Z_B[M]$ is a $Z_B$-ultrafilter on $X$.
\end{proof}
\begin{theorem}
	If $\mathscr U$ is a $Z_B$-ultrafilter on $X$ then $Z_B^{-1}[\mathscr U]$ is a maximal ideal in $B_1(X)$.
\end{theorem}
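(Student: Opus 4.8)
The plan is to mimic the classical duality argument familiar from $C(X)$: show that the proper ideal $M := Z_B^{-1}[\mathscr U]$ — proper by Theorem~\ref{p2thm2.6}, since $\emptyset \notin \mathscr U$ forces $\mathbf 1 \notin M$ — admits no proper ideal strictly above it. So I would begin by assuming $I$ is an ideal of $B_1(X)$ with $M \subsetneq I$, fix some $g \in I \setminus M$, and aim to force $I = B_1(X)$, contradicting the properness of $I$.

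Since $g \notin M = Z_B^{-1}[\mathscr U]$, we have $Z(g) \notin \mathscr U$. The crucial step is to produce a member of $\mathscr U$ disjoint from $Z(g)$. For this I would invoke the maximality of $\mathscr U$ in the form recorded in the Remark: $\mathscr U$ is maximal among subfamilies of $Z(B_1(X))$ with the finite intersection property. If $Z(g)$ met every member of $\mathscr U$, then — because finite intersections of members of $\mathscr U$ again lie in $\mathscr U$ — the family $\mathscr U \cup \{Z(g)\}$ would have the finite intersection property; being a subfamily of $Z(B_1(X))$, it could be extended to a $Z_B$-filter via the construction $\mathscr F(\mathscr B)$ described just after the definition of $Z_B$-filter, and this $Z_B$-filter would contain $\mathscr U$ properly (as $Z(g) \notin \mathscr U$), contradicting that $\mathscr U$ is a $Z_B$-ultrafilter. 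Hence there is $h \in M$ with $Z(h) \cap Z(g) = \emptyset$.

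Now $Z(h) \cap Z(g) = Z(h^2 + g^2) = \emptyset$, and by the result of \cite{AA} a Baire one function with empty zero set is a unit of $B_1(X)$, so $h^2 + g^2$ is invertible in $B_1(X)$. But $h \in M \subseteq I$ and $g \in I$, whence $h^2 + g^2 \in I$; an ideal containing a unit is the whole ring, so $I = B_1(X)$. This contradiction shows that no proper ideal properly contains $M$, i.e., $Z_B^{-1}[\mathscr U] = M$ is maximal.

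I expect the only genuinely delicate point to be the middle step — extracting from the ultrafilter property a single $h \in M$ with $Z(h)$ disjoint from $Z(g)$ — and there the subtlety is purely that the extension one builds really is a $Z_B$-filter, which is fine because $Z(B_1(X))$ is closed under the finite intersections used and $Z(g)$ is itself a zero set. The remaining steps are routine once one recalls that $B_1(X)$ is closed under squares and sums and that empty-zero-set functions are units.
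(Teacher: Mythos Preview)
Your proof is correct, but the paper takes a shorter, more structural route that stays entirely at the level of the ideal--filter correspondence. Rather than picking $g \in I \setminus M$ and manufacturing a unit, the paper lets $I \supseteq Z_B^{-1}[\mathscr U]$ be any proper ideal, applies Theorem~\ref{p2thm2.5} to obtain the $Z_B$-filter $Z_B[I] \supseteq Z_B[Z_B^{-1}[\mathscr U]] = \mathscr U$, invokes maximality of $\mathscr U$ to get $Z_B[I] = \mathscr U$, and then closes up via $I \subseteq Z_B^{-1}[Z_B[I]] = Z_B^{-1}[\mathscr U]$. Your argument is more hands-on --- it actually exhibits the unit $h^2 + g^2$ in $I$ and never needs to know that $Z_B[I]$ is a $Z_B$-filter --- while the paper's two-line argument makes the duality $Z_B \leftrightarrow Z_B^{-1}$ do all the work and foreshadows why the restriction to maximal ideals is a bijection. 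Both are standard; yours trades brevity for explicitness, and the only extra cost is the ultrafilter disjointness step, which you handle correctly.
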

\begin{proof}
	By Theorem \ref{p2thm2.6}, we have $Z_B^{-1}[\mathscr U]$ is a proper ideal in $B_1(X)$. Let $I$ be a proper ideal in $B_1(X)$ such that $Z_B^{-1}[\mathscr U] \subseteq I$. It is enough to show that $Z_B^{-1}[\mathscr U] = I$. Now $Z_B^{-1}[\mathscr U] \subseteq I \implies Z_B[Z_B^{-1}[\mathscr U]] \subseteq Z_B[I] \implies \mathscr U \subseteq Z_B[I]$. Since $\mathscr U$ is a $Z_B$-ultrafilter on $X$, we have $\mathscr U = Z_B[I] \implies Z_B^{-1}[\mathscr U]=Z_B^{-1}[Z_B[I]] \supseteq I$. Hence $Z_B^{-1}[\mathscr U] = I$
\end{proof}
\begin{remark}
	Each $Z_B$-ultrafilter on $X$ is the image of a maximal ideal in $B_1(X$) under the map $Z_B$.
\end{remark}
\noindent Let $\mathcal M(B_1(X))$ be the collection of all maximal ideals in $B_1(X$) and $\Omega_B(X)$ be the collection of all $Z_B$-ultrafilters on $X$. If we restrict the map $Z_B$ to the class $\mathcal M(B_1(X))$, then it is clear that the map $Z_B\bigg|_{\mathcal M(B_1(X))}:\mathcal M(B_1(X)) \rightarrow \Omega_B(X)$ is a surjective map. Further, this restriction map is a bijection, as seen below. 
\begin{theorem}
	The map $Z_B\bigg|_{\mathcal M(B_1(X))}:\mathcal M(B_1(X)) \rightarrow \Omega_B(X)$ is a bijection.
\end{theorem}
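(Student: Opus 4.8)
The plan is to use the surjectivity of $Z_B\big|_{\mathcal M(B_1(X))}$, which has already been recorded in the discussion preceding the statement, and to reduce everything to proving that this restricted map is injective. So suppose $M_1, M_2 \in \mathcal M(B_1(X))$ satisfy $Z_B[M_1] = Z_B[M_2]$; the goal is to conclude $M_1 = M_2$.

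The key observation is the identity $M = Z_B^{-1}[Z_B[M]]$ valid for \emph{every} maximal ideal $M$ of $B_1(X)$ — in sharp contrast with the Observation following Theorem \ref{p2thm2.6}, where the analogous equality can fail for a non-maximal ideal. To establish it, first recall the always-valid inclusion $M \subseteq Z_B^{-1}[Z_B[M]]$: if $f \in M$ then $Z(f) \in Z_B[M]$, hence $f \in Z_B^{-1}[Z_B[M]]$ by definition. Next, by Theorem \ref{p2thm2.8} the family $Z_B[M]$ is a $Z_B$-ultrafilter, in particular a $Z_B$-filter, so by Theorem \ref{p2thm2.6} its preimage $Z_B^{-1}[Z_B[M]]$ is a \emph{proper} ideal of $B_1(X)$. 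Since $M$ is maximal and is contained in this proper ideal, the two ideals coincide.

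Granting this, the conclusion is immediate: from $Z_B[M_1] = Z_B[M_2]$ we obtain
\[
M_1 = Z_B^{-1}[Z_B[M_1]] = Z_B^{-1}[Z_B[M_2]] = M_2,
\]
so $Z_B\big|_{\mathcal M(B_1(X))}$ is injective. Combining this with the surjectivity already noted (each $Z_B$-ultrafilter on $X$ arises as $Z_B[M]$ for some maximal ideal $M$), the restricted map is a bijection.

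I do not expect a genuine obstacle in this argument; the only step meriting attention is the identity $M = Z_B^{-1}[Z_B[M]]$, and its proof rests solely on the maximality of $M$ together with the fact, supplied by Theorems \ref{p2thm2.6} and \ref{p2thm2.8}, that the $Z_B^{-1}$-image of a $Z_B$-ultrafilter is always a proper ideal. One may equivalently phrase the injectivity directly by noting that $M_i \subseteq Z_B^{-1}[Z_B[M_i]]$ is a proper ideal containing the maximal ideal $M_i$, hence equal to $M_i$, and then reading off $M_1 = M_2$ from $Z_B[M_1] = Z_B[M_2]$; this is merely a repackaging of the same idea.
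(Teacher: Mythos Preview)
Your proposal is correct and follows essentially the same route as the paper: reduce to injectivity, use the inclusion $M\subseteq Z_B^{-1}[Z_B[M]]$ together with maximality of $M$ and properness of $Z_B^{-1}[Z_B[M]]$ to get equality, and then read off $M_1=M_2$ from $Z_B[M_1]=Z_B[M_2]$. If anything, you spell out a bit more explicitly than the paper why $Z_B^{-1}[Z_B[M]]$ is a proper ideal.
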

\begin{proof}
	It is enough to check that $Z_B\bigg|_{\mathcal M(B_1(X))}: \mathcal M(B_1(X)) \rightarrow \Omega_B(X)$ is injective. Let $M_1$ and $M_2$ be two members in $\mathcal M(B_1(X))$ such that $ Z_B[M_1]=Z_B[M_2] \implies Z_B^{-1}[Z_B[M_1]]=Z_B^{-1}[Z_B[M_1]]$. But $M_1 \subseteq Z_B^{-1}[Z_B[M_1]]$ and $M_2 \subseteq Z_B^{-1}[Z_B[M_2]]$. By maximality of $M_1$ and $M_2$ we have, $M_1 = Z_B^{-1}[Z_B[M_1]] = Z_B^{-1}[Z_B[M_2]] =M_2 $.
\end{proof}
\begin{definition}
	An ideal $I$ in $B_1(X)$ is called a $Z_B$-ideal if $Z_B^{-1}[Z_B[I]]=I$, i.e., $\forall$ $f \in B_1(X)$, $f \in I \iff Z(f) \in Z_B[I]$.
\end{definition}
\noindent Since $Z_B[Z_B^{-1}[\mathscr F_B]]= \mathscr F_B$, $Z_B^{-1}[\mathscr F_B]$ is a $Z_B$-ideal for any $Z_B$-filter $\mathscr F_B$ on $X$.\\
If $I$ is any ideal in $B_1(X)$, then, $Z_B^{-1}[Z_B[I]]$ is the smallest $Z_B$-ideal containing $I$. It is easy to observe 
\begin{enumerate}
	\item Every maximal ideal in $B_1(X)$ is a $Z_B$ ideal.
	\item The intersection of arbitrary family of $Z_B$-ideals in $B_1(X)$ is always a $Z_B$-ideal.
	\item The map $Z_B \bigg |_{\mathscr J_B}: \mathscr J_B \rightarrow \mathscr F_B(X)$  is a bijection, where $\mathscr J_B$ denotes the collection of all $Z_B$-filters on $X$.
\end{enumerate}

\begin{example}
	Let $I=\{f \in B_1(\mathbb{R}):f(1)=f(2)=0\}$. Then $I$ is a $Z_B$ ideal in $B_1(\mathbb{R})$ which is not maximal, as $I \subsetneq \widehat{M}_{1}=\{f \in B_1(\mathbb{R}):f(1)=0\}$, which is in fact a maximal ideal. The ideal $I$ is not a prime ideal, as the functions $x-1$ and $x-2$ do not belong to $I$, but their product belongs to $I$. Also no proper ideal of $I$ is prime. More generally, for any subset $S$ of $\mathbb{R}, I_{S}=\{f \in B_1(\mathbb{R}):f(S)=0\}$ is a $Z_B$-ideal in $B_1(\mathbb{R})$.
\end{example}
\noindent It is well known that in a commutative ring $R$ with unity, the intersection of all prime ideals of $R$ containing an ideal $I$ is called the \textbf{radical of $I$} and it is denoted by $\sqrt{I}$. For any ideal $I$, the radical of $I$ is given by $\{a \in R: a^n\in I$, for some $n \in \mathbb{N}\}$ (\cite{GJ}) and in general $I \subseteq \sqrt{I}$. For if $I= \sqrt{I}$, $I$ is called a radical ideal.
\begin{theorem}
	A $Z_B$-ideal $I$ in $B_1(X)$ is a radical ideal.
\end{theorem}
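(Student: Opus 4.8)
The goal is to show $I=\sqrt{I}$ for a $Z_B$-ideal $I$. Since $I\subseteq\sqrt{I}$ always holds, the whole content is the reverse inclusion $\sqrt{I}\subseteq I$. Recall that $\sqrt{I}=\{f\in B_1(X): f^n\in I \text{ for some } n\in\mathbb{N}\}$. So the plan is to take an arbitrary $f\in B_1(X)$ with $f^n\in I$ for some $n\in\mathbb{N}$ and deduce $f\in I$, using only the defining property of a $Z_B$-ideal, namely $f\in I \iff Z(f)\in Z_B[I]$.

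The one small observation that makes everything work is that taking powers does not change the zero set: for $f\in B_1(X)$ and $n\in\mathbb{N}$ one has $Z(f^n)=Z(f)$, because $(f(x))^n=0$ in $\mathbb{R}$ if and only if $f(x)=0$. (Note $f^n\in B_1(X)$ since $B_1(X)$ is a ring, so $Z(f^n)$ indeed lies in $Z(B_1(X))$.)

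With this in hand the argument is immediate: if $f^n\in I$, then by definition of $Z_B[I]$ we get $Z(f^n)\in Z_B[I]$; but $Z(f^n)=Z(f)$, so $Z(f)\in Z_B[I]$, and since $I$ is a $Z_B$-ideal this forces $f\in I$. Hence $\sqrt{I}\subseteq I$, and combined with $I\subseteq\sqrt{I}$ we conclude $I=\sqrt{I}$, i.e., $I$ is a radical ideal.

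There is no real obstacle here; the only point requiring a line of justification is the identity $Z(f^n)=Z(f)$, and after that the $Z_B$-ideal hypothesis does all the work. I would present it as a short direct proof rather than invoking any machinery about prime ideals.
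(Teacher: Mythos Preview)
Your proof is correct and is essentially the same as the paper's: both arguments hinge on the identity $Z(f^n)=Z(f)$ together with the defining property of a $Z_B$-ideal to pass from $f^n\in I$ to $f\in I$. The paper presents it as a single chain of set equalities $\sqrt{I}=\{f:f^n\in I\}=\{f:Z(f^n)\in Z_B[I]\}=\{f:Z(f)\in Z_B[I]\}=I$, while you phrase it as the inclusion $\sqrt{I}\subseteq I$, but the content is identical.
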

\begin{proof}
	$\sqrt{I}=\{f \in B_1(X): \exists$ $n \in \mathbb{N}$ such that $f^n \in I\}=\{f \in B_1(X): $ such that $Z(f^n) \in Z_B[I]$ for some $n \in \mathbb{N}\}$ \big(As $I$ is a $Z_B$-ideal in $B_1(X)$ \big) $=\{f \in B_1(X): $ such that $Z(f) \in Z_B[I]$ $\}=\{f \in B_1(X): f \in I\}=I$. So $I$ is a radical ideal in $B_1(X)$.
\end{proof}
\begin{corollary}
	Every $Z_B$-ideal $I$ in $B_1(X)$ is the intersection of all prime ideals in $B_1(X)$ which contains $I$.
	 
\end{corollary}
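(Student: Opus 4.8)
The plan is to derive this directly from the preceding theorem, which states that every $Z_B$-ideal $I$ in $B_1(X)$ is a radical ideal, together with the standard characterization of the radical recalled just above it, namely that $\sqrt{I}$ equals the intersection of all prime ideals of $B_1(X)$ containing $I$. So the argument is essentially a one-line deduction: since $I$ is a $Z_B$-ideal, the previous theorem gives $I = \sqrt{I}$; and since $\sqrt{I}$ is by definition $\bigcap\{P : P \text{ is a prime ideal of } B_1(X),\ I \subseteq P\}$, we conclude that $I$ is exactly this intersection.

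The key steps, in order, would be: first, recall that $B_1(X)$ is a commutative ring with unity (established in \cite{AA}), so the general theory of radicals applies; second, invoke the displayed fact preceding the previous theorem that in such a ring $\sqrt{I}$ is the intersection of all prime ideals containing $I$; third, apply the previous theorem to get $I = \sqrt{I}$ for the $Z_B$-ideal $I$; fourth, combine these to read off that $I$ itself is the intersection of all primes containing it. One should also note for completeness that the family of prime ideals containing $I$ is nonempty — this holds because $I$ is proper (by our standing convention that ideals are proper) and every proper ideal in a commutative ring with unity is contained in some maximal, hence prime, ideal — so the intersection is over a nonempty family and the statement is not vacuous.

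I do not anticipate any genuine obstacle here; the corollary is purely formal once the previous theorem is in hand. The only point requiring the slightest care is making sure the ambient ring-theoretic fact about radicals is being quoted in the right form (intersection of prime ideals \emph{containing} $I$, not merely the minimal primes over $I$), but this is exactly the statement cited from \cite{GJ} in the paragraph before the previous theorem, so no new work is needed. I would therefore keep the proof to two or three sentences, simply chaining the definition of $\sqrt{I}$, the identity $I = \sqrt{I}$ from the previous theorem, and the nonemptiness remark.
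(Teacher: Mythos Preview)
Your proposal is correct and matches the paper's intended approach exactly: the corollary is stated in the paper without proof because it follows immediately from the preceding theorem $I=\sqrt{I}$ together with the recalled fact (from \cite{GJ}) that $\sqrt{I}$ is the intersection of all prime ideals containing $I$. Your added remark about nonemptiness of the family of primes is a harmless extra detail.
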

\noindent Next theorem establishes some equivalent conditions on the relationship among $Z_B$-ideals and prime ideals of $B_1(X)$.
\begin{theorem}\label{eqv4}
	For a $Z_B$-ideal $I$ in $B_1(X)$ the following conditions are equivalent:
	\begin{enumerate}
		\item $I$ is a prime ideal of $B_1(X)$.
		\item $I$ contains a prime ideal of $B_1(X)$.
		\item if $fg=0$, then either $f \in I$ or $g \in I$.
		\item Given $f \in B_1(X)$ there exists $Z \in Z_B[I]$, such that $f$ does not change its sign on $Z$.
	\end{enumerate}
\end{theorem}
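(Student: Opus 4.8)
\emph{Proof plan.} The plan is to prove the cyclic chain of implications $(1)\Rightarrow(2)\Rightarrow(3)\Rightarrow(4)\Rightarrow(1)$. The step $(1)\Rightarrow(2)$ is trivial, since $I$ contains itself. For $(2)\Rightarrow(3)$: if $P$ is a prime ideal with $P\subseteq I$ and $fg=0$, then $fg=0\in P$, so primeness of $P$ gives $f\in P\subseteq I$ or $g\in P\subseteq I$. Note that neither of these two steps uses the hypothesis that $I$ is a $Z_B$-ideal; that hypothesis is only needed in the last implication.

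For $(3)\Rightarrow(4)$ I would use the lattice structure of $B_1(X)$. Given $f\in B_1(X)$, consider $f\vee 0$ and $f\wedge 0$, both of which lie in $B_1(X)$ because $B_1(X)$ is a lattice-ordered ring (indeed $f=(f\vee 0)+(f\wedge 0)$). A pointwise check shows $(f\vee 0)\cdot(f\wedge 0)=0$, so by $(3)$ at least one of the two belongs to $I$. If $f\wedge 0\in I$, then $Z(f\wedge 0)=\{x\in X: f(x)\ge 0\}\in Z_B[I]$ by the definition of $Z_B[I]$, and $f$ does not change sign on this set; if instead $f\vee 0\in I$, then $Z(f\vee 0)=\{x\in X: f(x)\le 0\}\in Z_B[I]$ does the job. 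Either way $(4)$ holds.

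The substantive step is $(4)\Rightarrow(1)$, and here the $Z_B$-ideal hypothesis is essential. Let $fg\in I$; I must show $f\in I$ or $g\in I$. Apply $(4)$ to the function $\abs{f}-\abs{g}\in B_1(X)$ to get a $Z\in Z_B[I]$ on which $\abs{f}-\abs{g}$ does not change sign; by symmetry assume $\abs{f}\ge\abs{g}$ on $Z$ (the other case yields $f\in I$ by the identical argument). Since $fg\in I$, $Z(fg)\in Z_B[I]$, and since $Z_B[I]$ is a $Z_B$-filter, $Z':=Z\cap Z(fg)\in Z_B[I]$. For each $x\in Z'$ we have $f(x)g(x)=0$ and $\abs{f(x)}\ge\abs{g(x)}$; if $g(x)\ne 0$ then $f(x)=0$, contradicting $\abs{f(x)}\ge\abs{g(x)}>0$, so $g$ vanishes on all of $Z'$. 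Hence $Z'\subseteq Z(g)$, and since $Z(g)\in Z(B_1(X))$ and $Z_B[I]$ is closed upward, $Z(g)\in Z_B[I]$. Finally, because $I$ is a $Z_B$-ideal, $Z(g)\in Z_B[I]$ forces $g\in I$, closing the cycle.

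The only delicate points are the verifications that the auxiliary functions $f\vee 0$, $f\wedge 0$, $\abs{f}$, $\abs{g}$, $\abs{f}-\abs{g}$ really lie in $B_1(X)$ (guaranteed by $B_1(X)$ being a commutative lattice-ordered ring) and the elementary pointwise sign analysis on $Z'$; everything else is bookkeeping with the $Z_B$-filter axioms and the defining property of a $Z_B$-ideal. I expect $(4)\Rightarrow(1)$ to be the main obstacle, in particular recognizing that $\abs{f}-\abs{g}$ is the right test function so that the ``no sign change'' conclusion of $(4)$ can be intersected with $Z(fg)$ to trap a zero set inside $Z(g)$.
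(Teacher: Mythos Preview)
Your proposal is correct and follows essentially the same cyclic scheme $(1)\Rightarrow(2)\Rightarrow(3)\Rightarrow(4)\Rightarrow(1)$ as the paper, including the use of $(f\vee 0)(f\wedge 0)=0$ for $(3)\Rightarrow(4)$ and of the test function $\abs{f}-\abs{g}$ for $(4)\Rightarrow(1)$. Your pointwise argument on $Z'=Z\cap Z(fg)$ is a slightly more explicit version of the paper's containment $Z\cap Z(gh)\subseteq Z(h)$, but the underlying idea is identical.
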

\begin{proof}
	$(1) \implies (2)$ and 	$(2) \implies (3)$ are immediate.\\
	$(3) \implies (4)$:
	Let $(3)$ be true. Choose $f \in B_1(X).$ Then $(f \vee 0). (f \wedge 0)=0$. So by $(3)$, $f \vee 0 \in I$ or $f \wedge 0 \in I$. Hence $Z(f \vee 0) \in Z_B[I]$ or $Z(f \wedge 0) \in Z_B[I]$. It is clear that $f \leq 0$ on $Z(f \wedge 0)$ and $f \geq 0$ on $Z(f \vee 0)$.\\\\
	$(4) \implies (1)$: Let $(4)$ be true. To show that $I$ is prime. Let $g,h \in B_1(X)$ be such that $gh \in I$. By $(4)$ there exists $Z \in Z_B[I]$, such that $|g|-|h| \geq 0$ on $Z$ (say). It is clear that, $Z \cap Z(g) \subseteq Z(h)$. Consequently $Z \cap Z(gh) \subseteq Z(h)$. Since $Z_B[I]$ is a $Z_B$-filter on $X$, it follows that $Z(h) \in Z_B[I]$. So $h \in I $, since $I$ is a $Z_B$-ideal. Hence, $I$ is prime.
\end{proof}

\begin{theorem}
	In $B_1(X)$, every prime ideal $P$ can be extended to a unique maximal ideal.
\end{theorem}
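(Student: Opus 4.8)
The plan is to deduce the statement from a single structural lemma: the collection of $Z_B$-ideals of $B_1(X)$ that contain a fixed prime ideal $P$ is totally ordered by inclusion. Granting this, the theorem is immediate. Existence of a maximal ideal over $P$ is the standard fact that any proper ideal in a ring with identity is contained in a maximal one. For uniqueness, suppose $M$ and $N$ are maximal ideals with $P \subseteq M$ and $P \subseteq N$. Since every maximal ideal of $B_1(X)$ is a $Z_B$-ideal, both $M$ and $N$ belong to the above collection, hence are comparable; as each is maximal and proper, $M = N$.

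So the work is in the lemma, and this is where the lattice structure of $B_1(X)$ enters. Let $A, B$ be $Z_B$-ideals with $P \subseteq A$ and $P \subseteq B$, and suppose toward a contradiction that $A \not\subseteq B$ and $B \not\subseteq A$; choose $f \in A \setminus B$ and $g \in B \setminus A$ (both nonzero, since $0$ lies in every ideal). Set $f_1 = |f| - \bigl(|f| \wedge |g|\bigr)$ and $g_1 = |g| - \bigl(|f| \wedge |g|\bigr)$; these lie in $B_1(X)$ because it is a lattice-ordered ring. At each point $x$, one of $|f(x)|, |g(x)|$ equals $|f(x)| \wedge |g(x)|$, so one of $f_1(x), g_1(x)$ vanishes; hence $f_1 g_1 = 0$. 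Since $0 \in P$ and $P$ is prime, $f_1 \in P$ or $g_1 \in P$.

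Consider the case $f_1 \in P \subseteq B$ (the case $g_1 \in P$ being symmetric, with the roles of $f,A$ and $g,B$ interchanged). I claim $Z(f_1) \cap Z(g) \subseteq Z(f)$: if $f_1(x) = 0$ then $|f(x)| \le |g(x)|$, so $g(x) = 0$ forces $f(x) = 0$. Now $f_1, g \in B$, so $Z(f_1), Z(g) \in Z_B[B]$, which is a $Z_B$-filter by Theorem~\ref{p2thm2.5}; therefore $Z(f_1) \cap Z(g) \in Z_B[B]$, and since $Z(f) \in Z(B_1(X))$ contains this set, the filter axioms give $Z(f) \in Z_B[B]$. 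As $B$ is a $Z_B$-ideal, $f \in B$ --- contradicting $f \in A \setminus B$. The symmetric case yields $g \in A$, another contradiction. Hence $A$ and $B$ are comparable, proving the lemma.

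The main obstacle is precisely this lemma: arranging the lattice bookkeeping so that $f_1 g_1 = 0$ and $Z(f_1) \cap Z(g) \subseteq Z(f)$, and then correctly invoking both the $Z_B$-filter properties of $Z_B[B]$ and the hypothesis that $B$ is a $Z_B$-ideal (the latter being exactly what converts ``$Z(f) \in Z_B[B]$'' back into ``$f \in B$''). Once the collection of $Z_B$-ideals over $P$ is known to be a chain, one could alternatively take its union $M_0$ --- again a proper $Z_B$-ideal, hence the unique largest member --- and note that any maximal ideal containing $P$ lies in the chain and must therefore equal $M_0$; but the direct comparability argument above already delivers the conclusion.
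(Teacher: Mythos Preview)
Your argument is correct, but it follows a different path from the paper's. The paper argues by contradiction: if $P \subseteq M_1$ and $P \subseteq M_2$ with $M_1 \neq M_2$, then $M_1 \cap M_2$ is a $Z_B$-ideal (being an intersection of $Z_B$-ideals) containing the prime ideal $P$, and hence is itself prime by the implication $(2)\Rightarrow(1)$ of Theorem~\ref{eqv4}; but the intersection of two incomparable ideals in a commutative ring with unity is never prime, which gives the contradiction. Your route instead establishes the stronger structural lemma that \emph{all} $Z_B$-ideals containing $P$ form a chain, via a direct lattice computation with $f_1 = |f| - (|f|\wedge|g|)$ and $g_1 = |g| - (|f|\wedge|g|)$, bypassing Theorem~\ref{eqv4} entirely. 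The paper's proof is shorter because it reuses Theorem~\ref{eqv4} as a black box, while your argument is more self-contained and in fact recovers a sharper conclusion (the chain property of $Z_B$-ideals above $P$, which is the $B_1(X)$ analogue of the classical Gillman--Jerison result for $C(X)$).
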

\begin{proof}
	If possible let $P$ be contained in two distinct maximal ideals $M_1$ and $M_2$. So, $P \subseteq M_1 \cap M_2$. Since maximal ideals in $B_1(X)$ are $Z_B$-ideals and intersection of any number of $Z_B$-ideals is $Z_B$-ideal, $M_1 \cap M_2$ is a $Z_B$-ideal containing the prime ideal $P$. By Theorem \ref{eqv4}, $M_1 \cap M_2$ is a prime ideal. But in a commutative ring with unity, for two ideals $I$ and $J$, if, $I \nsubseteq J$ and $J \nsubseteq I$, then $I \cap J $ is not a prime ideal. Thus $M_1 \cap M_2$ is not prime ideal and we get a contradiction. So, every prime ideal can be extended to a unique maximal ideal.
\end{proof}
\begin{corollary}
	$B_1(X)$ is a Gelfand ring for any topological space $X$.
\end{corollary}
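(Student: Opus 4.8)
The plan is to invoke directly the standard characterization of Gelfand rings together with the theorem proved immediately before this corollary. Recall that a commutative ring $R$ with unity is called a \emph{Gelfand ring} (equivalently, a \emph{pm-ring}) precisely when every prime ideal of $R$ is contained in a unique maximal ideal. Since $B_1(X)$ was shown in \cite{AA} to be a commutative ring with unity, the only thing that remains to be checked is the uniqueness-of-maximal-ideal condition for each prime ideal of $B_1(X)$.

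But that is exactly the statement of the theorem just established: in $B_1(X)$, every prime ideal $P$ can be extended to a unique maximal ideal. Combining these two facts yields at once that $B_1(X)$ is a Gelfand ring, with no further argument required; the substantive work — using that a $Z_B$-ideal containing a prime ideal is again prime (Theorem \ref{eqv4}), and that the intersection of two incomparable ideals in a commutative ring with $1$ is never prime — has already been carried out in the preceding theorem. Accordingly I expect no real obstacle here: the corollary is a one-line deduction, and I would simply record it as such. If anything deserves emphasis, it is that $X$ is allowed to be an arbitrary topological space, since the entire $Z_B$-filter/ideal correspondence developed in this section (Theorems \ref{p2thm2.5}, \ref{p2thm2.6}, \ref{p2thm2.8}, Theorem \ref{eqv4}, and the uniqueness theorem) was built without imposing any separation axiom or other topological restriction on $X$; hence the Gelfand property transfers with the same generality.
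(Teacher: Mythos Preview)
Your proposal is correct and matches the paper's approach exactly: the corollary is recorded without proof in the paper because it follows immediately from the preceding theorem together with the definition of a Gelfand ring. Your write-up simply makes that one-line deduction explicit.
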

\begin{definition}
	A $Z_B$-filter $\mathscr F _B$ on $X$ is called a prime $Z_B$-filter on $X$, if, for any $Z_1,Z_2 \in Z(B_1(X))$ with $Z_1 \cup Z_2 \in \mathscr F_B$ either $Z_1 \in \mathscr F_B$ or $Z_2 \in \mathscr F_B$.
\end{definition}
\noindent The next two theorems are analogous to Theorem 2.12 in \cite{GJ} and therefore, we state them without proof.
\begin{theorem}
		If $I$ is a prime ideal in $B_1(X)$, then $Z_B[I]=\{Z(f):f \in I\}$ is a prime $Z_B$-filter on $X$.
\end{theorem}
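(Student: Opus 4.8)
The plan is to mimic the classical argument from Gillman--Jerison (Theorem 2.12) for $C(X)$, adapted to $B_1(X)$ and $Z_B$-filters. Given a prime ideal $I$ in $B_1(X)$, I already know from Theorem \ref{p2thm2.5} that $Z_B[I]$ is a $Z_B$-filter on $X$; what remains is to verify the primeness condition: whenever $Z_1, Z_2 \in Z(B_1(X))$ satisfy $Z_1 \cup Z_2 \in Z_B[I]$, at least one of $Z_1, Z_2$ lies in $Z_B[I]$.

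First I would fix such $Z_1, Z_2$ and write $Z_1 = Z(f_1)$, $Z_2 = Z(f_2)$ for some $f_1, f_2 \in B_1(X)$, using that the map $Z : B_1(X) \to Z(B_1(X))$ is surjective. Since $Z(f_1 f_2) = Z(f_1) \cup Z(f_2) = Z_1 \cup Z_2 \in Z_B[I]$, there is some $g \in I$ with $Z(g) = Z(f_1 f_2)$. The key step is then to transfer membership from $g$ to the product $f_1 f_2$: I would consider the element $g f_1 f_2 \in I$ (it lies in $I$ since $I$ is an ideal), observe that $Z(g f_1 f_2) = Z(g) \cup Z(f_1 f_2) = Z(f_1 f_2)$, and — crucially — use that $I$ is a $Z_B$-ideal. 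Indeed, every prime ideal of $B_1(X)$ is a $Z_B$-ideal: by Theorem \ref{eqv4}, condition (1) for a $Z_B$-ideal $I$ forces primeness, but here $I$ is given prime, so I should first argue that a prime ideal is automatically a $Z_B$-ideal. This follows because $\sqrt{I} = I$ for prime $I$, and one checks directly that $f \in B_1(X)$ with $Z(f) \in Z_B[I]$ implies, via multiplying a witnessing $h \in I$ (with $Z(h) = Z(f)$) appropriately, that $f^2 h \in I$ has $Z(f^2 h) = Z(f)$, hence $f \in \sqrt{I} = I$. With $I$ a $Z_B$-ideal in hand, from $Z(g f_1 f_2) = Z(f_1 f_2) \in Z_B[I]$ we get $f_1 f_2 \in I$, and now primeness of $I$ gives $f_1 \in I$ or $f_2 \in I$, i.e. $Z_1 \in Z_B[I]$ or $Z_2 \in Z_B[I]$.

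The main obstacle I anticipate is the bookkeeping around $Z_B$-ideals: one must be careful that the argument that a prime ideal is a $Z_B$-ideal does not implicitly assume what is being proved. The clean route is the radical-ideal characterization — a prime ideal equals its own radical, and one shows abstractly that in $B_1(X)$ every radical ideal is a $Z_B$-ideal (if $Z(f) \in Z_B[I]$, pick $h \in I$ with $Z(h) = Z(f)$; then $Z((f - \text{something})\cdots)$ — more simply, $f^2 h$ and $h$ have the same zero set, and since $I$ is an ideal $f^2 h \in I$, while $Z(f^2 h) = Z(f) = Z(h)$, so... actually the cleanest is: $h \in I$, $Z(h)=Z(f)$, so $Z(h) = Z(h^2 + f^2 h^2)$-type manipulations let us conclude $f^{2} \in I$ hence $f \in \sqrt I = I$). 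Once this lemma is isolated, the rest is the short computation above, parallel to the $C(X)$ case, and should go through without topological hypotheses on $X$ since all we use is that $B_1(X)$ is a commutative ring with unity closed under the lattice operations and that $Z$ is surjective onto $Z(B_1(X))$.
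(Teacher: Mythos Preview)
Your proposal hinges on the claim that every prime ideal of $B_1(X)$ is a $Z_B$-ideal, and this is where it breaks down. The argument you sketch --- pick $h\in I$ with $Z(h)=Z(f)$, form $f^2h$ or $h^2+f^2h^2$, and conclude $f^2\in I$ --- does not go through: from $h\in I$ you certainly get $f^2h\in I$ and $h^2(1+f^2)\in I$, but neither of these forces any power of $f$ alone into $I$, so $f\in\sqrt I$ does not follow. The paper proves only the opposite implication (every $Z_B$-ideal is radical), and in the parallel $C(X)$ setting it is a standard fact (Gillman--Jerison, Chapter~14) that prime ideals need \emph{not} be $z$-ideals whenever $X$ is not a $P$-space. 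There is no reason to expect the implication you need here, and your own hedging in the last paragraph reflects that the step cannot be completed as written.

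The paper states the theorem without proof, deferring to the argument of Theorem~2.12 in \cite{GJ}, which avoids the $Z_B$-ideal issue entirely by exploiting the lattice structure. Concretely: write $Z_i=Z(f_i)$ with $f_i\ge 0$ (replace $f_i$ by $|f_i|$) and choose $g\in I$ with $Z(g)=Z_1\cup Z_2$. Set $h=(f_1-f_2)\vee 0$ and $k=(f_2-f_1)\vee 0$; then $hk=0\in I$, so primeness of $I$ gives $h\in I$ or $k\in I$. If $h\in I$ then $g^2+h^2\in I$, and a direct check shows $Z(g^2+h^2)=Z(g)\cap Z(h)=Z_1$, so $Z_1\in Z_B[I]$; symmetrically $k\in I$ yields $Z_2\in Z_B[I]$. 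This route uses only that $B_1(X)$ is a lattice-ordered ring and that $I$ is prime, with no appeal to $I$ being a $Z_B$-ideal.
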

\begin{theorem}
		Let $\mathscr F_B$ be a prime $Z_B$-filter on $X$ then $Z_B^{-1}[\mathscr F_B]= \{f \in B_1(X):Z(f) \in \mathscr F_B\}$ is a prime ideal in $B_1(X)$.
\end{theorem}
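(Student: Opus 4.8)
The plan is to reduce everything to the already-established Theorem~\ref{p2thm2.6} together with the defining property of a prime $Z_B$-filter. By Theorem~\ref{p2thm2.6} we already know that $I := Z_B^{-1}[\mathscr F_B]$ is a proper ideal of $B_1(X)$ (properness coming from $\emptyset \notin \mathscr F_B$, hence $\mathbf 1 \notin I$). So the entire content of the theorem is the primeness of $I$, and for this I would argue directly from the definitions rather than invoking the characterisation in Theorem~\ref{eqv4}.

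First I would record the elementary identity $Z(gh) = Z(g) \cup Z(h)$ for $g, h \in B_1(X)$: the product $gh$ again lies in $B_1(X)$ since $B_1(X)$ is a ring, and a point $x$ satisfies $g(x)h(x) = 0$ precisely when $g(x) = 0$ or $h(x) = 0$. Thus $Z(g)$, $Z(h)$ and $Z(gh)$ all belong to $Z(B_1(X))$, which is the ambient family in which the notion of a prime $Z_B$-filter was defined.

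Now suppose $g, h \in B_1(X)$ with $gh \in I$. By definition of $I$ this means $Z(gh) \in \mathscr F_B$, i.e., $Z(g) \cup Z(h) \in \mathscr F_B$. Applying the hypothesis that $\mathscr F_B$ is a prime $Z_B$-filter to the pair $Z_1 = Z(g)$, $Z_2 = Z(h)$ of members of $Z(B_1(X))$, we conclude $Z(g) \in \mathscr F_B$ or $Z(h) \in \mathscr F_B$; equivalently $g \in I$ or $h \in I$. Together with the fact that $I$ is a proper ideal, this is exactly the assertion that $I$ is a prime ideal of $B_1(X)$.

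I do not expect a genuine obstacle here: the only points to be checked carefully are (a) that $Z(B_1(X))$ is closed under the operation needed so that the definition of ``prime $Z_B$-filter'' actually applies to the sets $Z(g)$ and $Z(h)$, which is immediate from $Z(gh) = Z(g) \cup Z(h)$, and (b) that $I$ is proper, which is subsumed in Theorem~\ref{p2thm2.6}. As a remark, one could alternatively observe that $I$ is a $Z_B$-ideal (since $Z_B[I] = Z_B[Z_B^{-1}[\mathscr F_B]] = \mathscr F_B$) and then obtain primeness from the equivalence $(3)\iff(1)$ of Theorem~\ref{eqv4}; but this route is strictly longer than the direct computation above, which is presumably why the authors simply refer to the analogous Theorem~2.12 of \cite{GJ}.
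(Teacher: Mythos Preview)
Your argument is correct and is exactly the standard one the paper has in mind: the authors omit the proof entirely, pointing instead to the analogous Theorem~2.12 of \cite{GJ}, whose content is precisely the computation $Z(gh)=Z(g)\cup Z(h)\in\mathscr F_B \Rightarrow Z(g)\in\mathscr F_B$ or $Z(h)\in\mathscr F_B$ that you spell out. Your side remark about the alternative route through Theorem~\ref{eqv4} is also accurate and appropriately flagged as unnecessary overhead.
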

\begin{corollary}
	Every prime $Z_B$-filter can be extended to a unique $Z_B$-ultrafilter on $X$.
\end{corollary}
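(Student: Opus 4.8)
The plan is to transport the statement to the ring side via the correspondence between $Z_B$-filters and ideals, and then quote the uniqueness of the maximal ideal lying above a prime ideal in $B_1(X)$. First I would take a prime $Z_B$-filter $\mathscr F_B$ on $X$ and set $P := Z_B^{-1}[\mathscr F_B]$. By the theorem immediately preceding this corollary, $P$ is a prime ideal of $B_1(X)$, and from the remarks following Theorem~\ref{p2thm2.6} we have $Z_B[P] = Z_B[Z_B^{-1}[\mathscr F_B]] = \mathscr F_B$; this identity will be used several times.

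For existence, I would invoke the theorem that every prime ideal of $B_1(X)$ extends to a unique maximal ideal, and pick a maximal ideal $M \supseteq P$. By Theorem~\ref{p2thm2.8}, $Z_B[M]$ is a $Z_B$-ultrafilter on $X$, and monotonicity of $Z_B$ together with $P \subseteq M$ yields $\mathscr F_B = Z_B[P] \subseteq Z_B[M]$. Thus $Z_B[M]$ is a $Z_B$-ultrafilter containing $\mathscr F_B$. For uniqueness, suppose $\mathscr U_1$ and $\mathscr U_2$ are $Z_B$-ultrafilters on $X$ with $\mathscr F_B \subseteq \mathscr U_i$ for $i = 1,2$. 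The elementary point here is that $\mathscr A \subseteq \mathscr B$ forces $Z_B^{-1}[\mathscr A] \subseteq Z_B^{-1}[\mathscr B]$ (directly from the definition of $Z_B^{-1}$), so $P = Z_B^{-1}[\mathscr F_B] \subseteq Z_B^{-1}[\mathscr U_i]$. By the theorem that $Z_B^{-1}$ of a $Z_B$-ultrafilter is a maximal ideal, $Z_B^{-1}[\mathscr U_1]$ and $Z_B^{-1}[\mathscr U_2]$ are maximal ideals both containing the prime ideal $P$; uniqueness of that extension gives $Z_B^{-1}[\mathscr U_1] = Z_B^{-1}[\mathscr U_2]$. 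Applying $Z_B$ and using $Z_B[Z_B^{-1}[\mathscr U_i]] = \mathscr U_i$ then forces $\mathscr U_1 = \mathscr U_2$.

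I do not anticipate a genuine obstacle: the proof is a bookkeeping exercise built on results already established. The only spots requiring a little care are the monotonicity of the maps $Z_B$ and $Z_B^{-1}$ with respect to inclusion, and the systematic use of the surjectivity identity $Z_B[Z_B^{-1}[\mathscr F]] = \mathscr F$ (valid for every $Z_B$-filter $\mathscr F$), so that passing back and forth between the filter and ideal pictures loses no information.
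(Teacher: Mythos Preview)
Your proposal is correct and is precisely the argument the paper has in mind: the corollary is stated without proof, immediately after the two theorems asserting that $Z_B$ and $Z_B^{-1}$ carry prime ideals to prime $Z_B$-filters and back, so the intended justification is exactly the transport-to-the-ring-side argument you wrote, invoking the earlier result that every prime ideal of $B_1(X)$ extends to a unique maximal ideal. There is nothing to add or correct.
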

\begin{corollary}
	A $Z_B$-ultrafilter $\mathscr U$ on $X$ is a prime $Z_B$-filter on $X$, as $\mathscr U= Z_B[M]$, for some maximal ideal $M$ in $B_1(X)$.
\end{corollary}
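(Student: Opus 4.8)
The plan is to chain together three facts already in hand rather than argue from scratch. First I would use the bijection $Z_B\big|_{\mathcal M(B_1(X))}:\mathcal M(B_1(X)) \rightarrow \Omega_B(X)$ established above: since $\mathscr U$ is a $Z_B$-ultrafilter, it lies in $\Omega_B(X)$, so by surjectivity of this restricted map there is a maximal ideal $M$ in $B_1(X)$ with $Z_B[M]=\mathscr U$ (this is exactly the clause ``for some maximal ideal $M$'' appearing in the statement, and it is the existence half of the correspondence, not merely the injectivity half). Second, in any commutative ring with unity every maximal ideal is prime, so $M$ is a prime ideal of $B_1(X)$. Third, I would invoke the theorem stating that the $Z_B$-image of a prime ideal of $B_1(X)$ is a prime $Z_B$-filter on $X$; applying it to $M$ gives that $Z_B[M]=\mathscr U$ is a prime $Z_B$-filter on $X$.

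There is essentially no obstacle: the corollary is an immediate splice of the maximal-ideal/$Z_B$-ultrafilter correspondence with the prime-ideal-to-prime-$Z_B$-filter theorem. The only point requiring a moment's care is making sure one cites the surjectivity (existence) part of the correspondence, since the whole argument hinges on representing the abstractly given $\mathscr U$ as $Z_B[M]$ for a concrete maximal ideal $M$.

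If one preferred a self-contained proof avoiding the ring side, one could argue directly: given $Z_1,Z_2\in Z(B_1(X))$ with $Z_1\cup Z_2\in\mathscr U$ and $Z_1\notin\mathscr U$, maximality of $\mathscr U$ forces the family $\mathscr U\cup\{Z_1\}$ to lack the finite intersection property, so there is $Z\in\mathscr U$ with $Z\cap Z_1=\emptyset$; then $Z\cap(Z_1\cup Z_2)=Z\cap Z_2\subseteq Z_2$, and since $Z\cap(Z_1\cup Z_2)\in\mathscr U$ the filter axioms give $Z_2\in\mathscr U$. But the three-step route through $B_1(X)$ is cleaner given what the preceding results already supply, so that is the one I would write up.
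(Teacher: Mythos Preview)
Your proposal is correct and matches the paper's approach exactly: the paper gives no separate proof, since the justification is already embedded in the corollary's own wording (``as $\mathscr U= Z_B[M]$, for some maximal ideal $M$''), which is precisely your three-step chain of the surjectivity of $Z_B\big|_{\mathcal M(B_1(X))}$, the fact that maximal ideals are prime, and the theorem that prime ideals map to prime $Z_B$-filters. Your optional direct argument via the finite intersection property is a valid alternative but is not the route the paper intends.
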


\section{Fixed ideals and free ideals in $B_1(X)$}
\noindent In this section, we introduce fixed and free ideals of $B_1(X)$ and $B_1^*(X)$ and completely characterize the fixed maximal ideals of $B_1(X)$ as well as those of $B_1^*(X)$. It is observed here that a natural relationship exists between fixed maximal ideals of $B_1^*(X)$ and the fixed maximal ideals of $B_1(X)$. However, free maximal ideals do not behave the same. In the last part of this section, we find a class of Baire one functions defined on a perfectly normal $T_1$ space $X$ which precisely determine the fixed and free maximal ideals of the corresponding ring.   

\begin{definition}
A proper ideal $I$ of $B_1(X)$ (respectively, $B_1^*(X)$) is called \textbf{fixed} if $ \bigcap Z[I] \neq \emptyset$. If $I$ is not fixed then it is called \textbf{free}.
\end{definition}
 \noindent For any Tychonoff space $X$, the fixed maximal ideals of the ring $B_1(X)$ and those of $B_1^*(X)$ are characterized.
\begin{theorem}\label{fixed_max}
	$\{\widehat{M}_p: p \in X\}$ is a complete list of fixed maximal ideals in $B_1(X)$, where $\widehat{M}_p= \{f \in B_1(X): f(p)=0\}$. Moreover, $p \neq q \implies \widehat{M}_p \neq \widehat{M}_q$.
\end{theorem}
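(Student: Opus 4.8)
The plan is to make the evaluation homomorphism do all the work, and then read off maximality, fixedness, completeness of the list, and distinctness from elementary ring theory together with the definition of a fixed ideal. First I would fix $p \in X$ and consider the evaluation map $e_p : B_1(X) \to \mathbb{R}$ given by $e_p(f) = f(p)$. Since sums, products and the constant function $1$ in $B_1(X)$ are all computed pointwise, $e_p$ is a ring homomorphism; it is surjective because every constant function belongs to $B_1(X)$; and its kernel is precisely $\widehat{M}_p = \{f \in B_1(X) : f(p) = 0\}$. Consequently $B_1(X)/\widehat{M}_p \cong \mathbb{R}$, which is a field, so $\widehat{M}_p$ is a maximal ideal of $B_1(X)$.

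Next I would verify that each $\widehat{M}_p$ is fixed and that nothing else is. Fixedness is immediate: every $f \in \widehat{M}_p$ satisfies $p \in Z(f)$, hence $p \in \bigcap Z[\widehat{M}_p]$, so this intersection is nonempty. For the converse direction, let $M$ be an arbitrary fixed maximal ideal of $B_1(X)$. By the definition of fixed there is a point $p$ with $p \in \bigcap Z[M]$; then $f(p) = 0$ for every $f \in M$, i.e.\ $M \subseteq \widehat{M}_p$. Since $\widehat{M}_p$ is a proper ideal (it omits the constant function $1$) and $M$ is maximal, we conclude $M = \widehat{M}_p$. Thus $\{\widehat{M}_p : p \in X\}$ is exactly the set of fixed maximal ideals.

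Finally, for the clause $p \neq q \implies \widehat{M}_p \neq \widehat{M}_q$, I would invoke the hypothesis that $X$ is Tychonoff: pick $g \in C(X)$ with $g(p) \neq g(q)$, and after subtracting the constant $g(p)$ we may assume $g(p) = 0$ and $g(q) \neq 0$. Since $C(X) \subseteq B_1(X)$, this $g$ witnesses $g \in \widehat{M}_p \setminus \widehat{M}_q$, so the two ideals differ. I do not expect a genuine obstacle here; the only points needing a word of justification are that $e_p$ is onto $\mathbb{R}$ (constants lie in $B_1(X)$) and that $C(X) \subseteq B_1(X)$, both recorded in \cite{AA}. The one place where real effort would be required is if one wanted to replace "Tychonoff" by a weaker separation hypothesis in the last clause, but under the stated assumptions the argument is routine.
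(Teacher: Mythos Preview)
Your proof is correct and follows essentially the same route as the paper: evaluation at $p$ gives a surjective ring homomorphism onto $\mathbb{R}$ with kernel $\widehat{M}_p$, fixedness and completeness are handled exactly as you describe, and the distinctness clause is obtained from the Tychonoff hypothesis via a separating continuous function (the paper phrases this last step as $\widehat{M}_p \cap C(X) = M_p$ together with the known $M_p \neq M_q$, which unwinds to precisely your argument).
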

\begin{proof}
Choose $p \in X$. The map $\Psi_p: B_1(X) \rightarrow \mathbb{R}$, defined by $\Psi(f)= f(p)$ is clearly a ring homomorphism. Since the constant functions are in $B_1(X)$, $\Psi_p$ is surjective and $\ker \Psi_p = \{f \in B_1(X): \Psi_p(f)=0\}=\{f \in B_1(X): f(p)=0\}=\widehat{M}_p$ (say).\\
By First isomorphism theorem of rings we get $B_1(X)/\widehat{M}_p$ is isomorphic to the field $\mathbb{R}$. $B_1(X)/\widehat{M}_p$ being a field we conclude that $\widehat{M}_p$ is a maximal ideal in $B_1(X)$. Since $p \in \bigcap Z_B[M]$, the ideal $\widehat{M}_p$ is a fixed ideal.\\
For any Tychonoff space $X$, we know that $p \neq q \implies M_p \neq M_q$, where $M_p=\{f \in C(X): f(p)=0\}$ is the fixed maximal ideal in $C(X)$. Since $\widehat{M_p} \cap C(X) = M_p$, it follows that for any Tychonoff space $X$, $p \neq q \implies \widehat{M_p} \neq \widehat{M_q}$. \\
Let $M$ be any fixed maximal ideal in $B_1(X)$. There exists $p \in X$ such that for all $f \in B_1(X),$ $f(p)=0$. Therefore, $M \subseteq \widehat{M_p}$. Since $M$ is a maximal ideal and $\widehat{M_p}$ is a proper ideal, we get $M=\widehat{M_p}$.
\end{proof}
\begin{theorem}
$\{\widehat{M}^*_p: p \in X\}$ is a complete list of fixed maximal ideals in $B_1(X)$, where $\widehat{M}^*_p= \{f \in B_1^*(X): f(p)=0\}$. Moreover, $p \neq q \implies \widehat{M}^*_p \neq \widehat{M}^*_q$.	
\end{theorem}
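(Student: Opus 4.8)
The plan is to mirror the proof of Theorem \ref{fixed_max}, replacing $B_1(X)$ by its subring $B_1^*(X)$ throughout and keeping track of the fact that boundedness is preserved by the operations involved. First I would fix $p \in X$ and consider the evaluation map $\Psi^*_p : B_1^*(X) \to \mathbb{R}$ given by $\Psi^*_p(f) = f(p)$. This is a ring homomorphism; it is surjective because every constant function is a bounded Baire one function, and its kernel is precisely $\widehat{M}^*_p$. The First Isomorphism Theorem then gives $B_1^*(X)/\widehat{M}^*_p \cong \mathbb{R}$, so $\widehat{M}^*_p$ is a maximal ideal, and it is fixed since $p \in \bigcap Z_B[\widehat{M}^*_p]$.

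Next I would show that distinct points give distinct ideals, and this is where the Tychonoff hypothesis enters. Given $p \neq q$ there is $g \in C^*(X) \subseteq B_1^*(X)$ with $g(p) = 0$ and $g(q) = 1$, so $g \in \widehat{M}^*_p \setminus \widehat{M}^*_q$. Equivalently, one can invoke $\widehat{M}^*_p \cap C^*(X) = M^*_p$ (using $B_1^*(X) \cap C(X) = C^*(X)$) together with the classical fact that the fixed maximal ideals $M^*_p$ of $C^*(X)$ separate the points of a Tychonoff space.

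Finally, for completeness, let $M$ be an arbitrary fixed maximal ideal of $B_1^*(X)$. By definition $\bigcap Z_B[M] \neq \emptyset$, so choose $p$ in this intersection; then $f(p) = 0$ for every $f \in M$, i.e. $M \subseteq \widehat{M}^*_p$. Since $M$ is maximal and $\widehat{M}^*_p$ is proper, $M = \widehat{M}^*_p$. Comparing this with the distinctness just established shows as a byproduct that $\bigcap Z_B[M]$ is in fact a single point.

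I do not expect a serious obstacle: the argument is essentially formal once one verifies that the evaluation map behaves on $B_1^*(X)$ exactly as it did on $B_1(X)$, and the only place topology is used is the separation step, which is precisely where ``Tychonoff'' is invoked. The one point deserving a sentence of care is checking that the auxiliary notions used for $B_1(X)$ earlier — that $Z_B[M]$ is a $Z_B$-filter and that fixedness is phrased via $\bigcap Z_B[M]$ — transfer verbatim to $B_1^*(X)$; but since the completeness argument above only needs the elementary containment $M \subseteq \widehat{M}^*_p$, nothing beyond the definition of a fixed ideal is actually required.
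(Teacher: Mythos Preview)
Your proposal is correct and follows essentially the same approach as the paper, which simply states that the proof is ``Similar to the proof of Theorem~\ref{fixed_max}.'' You have carefully written out the adaptation to $B_1^*(X)$ that the paper leaves implicit, including the use of $C^*(X)\subseteq B_1^*(X)$ for the separation step, so nothing further is needed.
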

\begin{proof}
	Similar to the proof of Theorem~\ref{fixed_max}.
\end{proof}
\noindent The following two theorems show the interrelations between fixed ideals of $B_1(X)$ and $B_1^*(X)$.  
	\begin{theorem}
	 If $I$ is any fixed ideal of $B_1(X)$ then $I \cap B_1^*(X)$ is a fixed ideal of $B_1^*(X)$.
	\end{theorem}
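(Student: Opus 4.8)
The plan is to show both that $I \cap B_1^*(X)$ is a proper ideal of $B_1^*(X)$ and that its associated zero-set family has nonempty intersection. First I would check the algebraic part: $I \cap B_1^*(X)$ is clearly closed under subtraction, and if $f \in I \cap B_1^*(X)$ and $h \in B_1^*(X)$, then $fh \in I$ (since $I$ is an ideal of $B_1(X)$ and $h \in B_1(X)$) and $fh \in B_1^*(X)$ (since $B_1^*(X)$ is a subring), so $fh \in I \cap B_1^*(X)$; thus it is an ideal. Properness follows because the constant function $\mathbf 1$ lies in $B_1^*(X)$ but not in $I$ (as $I$ is proper in $B_1(X)$), so $\mathbf 1 \notin I \cap B_1^*(X)$.

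Next I would handle the ``fixed'' part, which is the crux. Since $I$ is fixed, there is a point $p \in \bigcap Z_B[I]$, i.e.\ $f(p) = 0$ for every $f \in I$. In particular $f(p) = 0$ for every $f \in I \cap B_1^*(X)$, so $p \in \bigcap\{Z(f) : f \in I \cap B_1^*(X)\} = \bigcap Z[I \cap B_1^*(X)]$. The only subtlety is that this intersection is taken over a possibly smaller family than $Z[I]$, but that only makes the intersection larger, so it is still nonempty; in fact $p$ still belongs to it. Hence $I \cap B_1^*(X)$ is a fixed ideal of $B_1^*(X)$.

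I do not anticipate a serious obstacle here: the statement is essentially a bookkeeping lemma, and the only thing to be careful about is to note that restricting to the smaller generating family of zero sets cannot destroy the common point. One could alternatively phrase the whole argument by observing that $Z_B[I \cap B_1^*(X)] \subseteq Z_B[I]$ as families of zero sets and $\bigcap Z_B[I] \subseteq \bigcap Z_B[I \cap B_1^*(X)]$, so nonemptiness is inherited; I would present it in the direct pointwise form above since it is cleaner. The proof is short enough that I would write it out in full rather than defer any step.
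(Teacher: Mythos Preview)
Your proposal is correct and is exactly the natural argument the paper has in mind: the paper's own proof simply reads ``Straightforward,'' and your write-up supplies the routine verification (ideal, proper, and the common point $p$ persists since $Z[I\cap B_1^*(X)]\subseteq Z_B[I]$).
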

	\begin{proof}
		Straightforward.
	\end{proof}
\begin{lemma}
	Given any $f \in B_1(X)$, there exists a positive unit $u$ of $B_1(X)$ such that $uf \in B_1^*(X)$.
\end{lemma}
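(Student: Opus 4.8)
The plan is to construct the unit $u$ explicitly from $f$ by a bounded reciprocal-type function. First I would set $g = 1 + f^2$, so that $g \in B_1(X)$, $g \geq 1$ everywhere, and in particular $g$ is a positive unit of $B_1(X)$ (its reciprocal is Baire one, since reciprocals of nowhere-zero Baire one functions are Baire one, as used already in the proof of Theorem~\ref{p2thm2.5}). Then I would take $u = \dfrac{1}{1+f^2}$. This $u$ is a positive unit of $B_1(X)$: it is Baire one as the reciprocal of the nowhere-zero Baire one function $1+f^2$, it is everywhere positive, and its inverse $1+f^2$ lies in $B_1(X)$.

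Next I would verify that $uf \in B_1^*(X)$, i.e. that $uf = \dfrac{f}{1+f^2}$ is a bounded Baire one function. It is Baire one because $B_1(X)$ is a ring and $u, f \in B_1(X)$. Boundedness is the elementary inequality $\left| \dfrac{t}{1+t^2} \right| \leq \dfrac{1}{2}$ for all real $t$, applied pointwise with $t = f(x)$; hence $|uf(x)| \leq \tfrac12$ for every $x \in X$, so $uf \in B_1^*(X)$.

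There is essentially no serious obstacle here; the only point that needs the machinery of the paper (rather than pure elementary analysis) is the closure of $B_1(X)$ under taking reciprocals of nowhere-vanishing functions and under products, both of which are available from \cite{AA} and are in fact already invoked earlier in the excerpt. If one wished to avoid even the mild nonlinearity, an alternative is $u = \dfrac{1}{1+|f|}$, which is again a positive unit (note $|f| \in B_1(X)$ since $B_1(X)$ is a lattice, so $1 + |f| \geq 1$ is a positive unit) with $uf = \dfrac{f}{1+|f|}$ satisfying $|uf| < 1$; I would present whichever of these is notationally cleanest, likely the $1+f^2$ version since positivity of the denominator is then transparent without splitting into cases on the sign of $f$.
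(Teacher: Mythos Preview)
Your proposal is correct and follows essentially the same approach as the paper: the paper takes $u=\dfrac{1}{|f|+1}$ (your alternative choice) and observes $|uf|\le 1$, while your primary choice $u=\dfrac{1}{1+f^2}$ with the bound $|uf|\le \tfrac12$ works just as well. The two constructions differ only cosmetically, and the justifications you cite (closure of $B_1(X)$ under reciprocals of nowhere-vanishing functions and under products, from \cite{AA}) are exactly those invoked in the paper.
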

\begin{proof}
	Consider $u= \frac{1}{|f|+1}$. Clearly $u$ is a positive unit in $B_1(X)$ \cite{AA} and $uf \in B_1^*(X)$ as $|uf| \leq 1$.
\end{proof}
\begin{theorem}
Let an ideal $I$ in $B_1(X)$ be such that $I \cap B_1^*(X)$ is a fixed ideal of $B_1^*(X)$. Then $I$ is a fixed ideal of $B_1(X)$.
\end{theorem}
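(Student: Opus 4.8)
The plan is to show that the two filters $Z_B[I]$ and $Z_B[I\cap B_1^*(X)]$ are actually the same, so that their intersections coincide; since the latter is nonempty by hypothesis, so is the former, which is exactly the statement that $I$ is fixed.

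First I would note the easy inclusion: since $I\cap B_1^*(X)\subseteq I$, we trivially have $Z_B[I\cap B_1^*(X)]\subseteq Z_B[I]$. The substance is the reverse inclusion. Given $f\in I$, apply the preceding lemma to obtain a positive unit $u$ of $B_1(X)$ with $uf\in B_1^*(X)$. Because $I$ is an ideal and $u\in B_1(X)$, we have $uf\in I$, hence $uf\in I\cap B_1^*(X)$. Now I would observe that $Z(uf)=Z(u)\cup Z(f)=Z(f)$, since $u$ is a unit and therefore $Z(u)=\emptyset$. Thus $Z(f)=Z(uf)\in Z_B[I\cap B_1^*(X)]$. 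As $f\in I$ was arbitrary, $Z_B[I]\subseteq Z_B[I\cap B_1^*(X)]$, and combining the two inclusions gives $Z_B[I]=Z_B[I\cap B_1^*(X)]$.

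Finally I would conclude: $\bigcap Z_B[I]=\bigcap Z_B[I\cap B_1^*(X)]\neq\emptyset$ by the assumption that $I\cap B_1^*(X)$ is a fixed ideal of $B_1^*(X)$, so $I$ is a fixed ideal of $B_1(X)$.

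There is no real obstacle here; the only point that needs a moment's care is the equality $Z(uf)=Z(f)$, which hinges on $u$ being a (nonvanishing) unit, together with the fact that multiplying an element of $I$ by an arbitrary element of $B_1(X)$ keeps it in $I$ — so the bounded representative $uf$ genuinely lies in $I\cap B_1^*(X)$.
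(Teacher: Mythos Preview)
Your proof is correct and follows essentially the same approach as the paper: both use the preceding lemma to replace each $f\in I$ by a bounded element $uf\in I\cap B_1^*(X)$ with $Z(uf)=Z(f)$, and then compare the intersections of zero sets. The only cosmetic difference is that you establish the equality $Z_B[I]=Z_B[I\cap B_1^*(X)]$ of the two families before intersecting, whereas the paper writes the chain $\bigcap_{f\in I}Z(f)=\bigcap_{f\in I}Z(u_ff)\supseteq\bigcap_{g\in I\cap B_1^*(X)}Z(g)\neq\emptyset$ directly.
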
 
\begin{proof}
For each $f \in I$, there exists a positive unit $u_f$ of $B_1(X)$ such that $u_f f \in I \cap B_1^*(X)$. Therefore, $\bigcap\limits_{f \in I} Z(f)= \bigcap \limits_{f \in I} Z(u_f f) \supseteq \bigcap \limits_{g \in B_1^*(X)\cap I}Z(g) \neq \emptyset$. Hence $I$ is fixed in $B_1(X)$.
\end{proof}
\noindent Since for any discrete space $X$, $C(X)=B_1(X)$ and $C^*(X)=B_1^*(X)$, then considering the example 4.7 of \cite{GJ}, we can conclude the following:
\begin{enumerate}
	\item For any maximal ideal $M$ of $B_1(X)$, $M\cap B_1^*(X)$ need not be a maximal ideal in $B_1^*(X)$.
	\item All free maximal ideals in $B_1^*(X)$ need not be of the form $M\cap B_1^*(X)$, where $M$ is a maximal ideal in $B_1(X)$.
\end{enumerate}
\begin{theorem} \label{chiP}
If $X$ is a perfectly normal $T_1$ space then for each $p \in X,$ $\chi_p:X \rightarrow \mathbb{R}$ given by
	\[ \chi_p(x)= \begin{cases} 
	1 &$ if $x=p$$ \\

	0 & $ otherwise.  $	\end{cases}
	\]
	is a Baire one function.
\end{theorem}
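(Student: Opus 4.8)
The plan is to show that $\chi_p$ is the pointwise limit of a sequence of continuous functions on $X$. Since $X$ is perfectly normal, every closed set is a $G_\delta$, and in particular the singleton $\{p\}$ (closed because $X$ is $T_1$) can be written as $\{p\} = \bigcap_{n=1}^\infty U_n$, where each $U_n$ is open; without loss of generality we may take $U_1 \supseteq U_2 \supseteq \cdots$ by replacing $U_n$ with $U_1 \cap \cdots \cap U_n$. Equivalently, perfect normality gives a continuous function $g : X \to [0,1]$ with $g^{-1}(0) = \{p\}$; I will use this formulation, as it is cleaner.

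Given such a $g$, the idea is to build continuous functions $f_n$ that are $1$ at $p$ and vanish off a shrinking neighborhood of $p$. Concretely, using normality (Urysohn's lemma) I would, for each $n$, separate the closed set $\{p\}$ from the closed set $\{x : g(x) \ge 1/n\}$ by a continuous function $f_n : X \to [0,1]$ with $f_n(p) = 1$ and $f_n \equiv 0$ on $\{x : g(x) \ge 1/n\}$. (Alternatively one can write down an explicit formula such as $f_n(x) = \max\{0,\, 1 - n\,g(x)\}$, which is continuous, equals $1$ at $p$, and vanishes wherever $g(x) \ge 1/n$; this avoids invoking Urysohn separately.) Then for any $x \ne p$ we have $g(x) > 0$, so $g(x) \ge 1/n$ for all large $n$, whence $f_n(x) = 0$ eventually; and $f_n(p) = 1$ for every $n$. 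Therefore $f_n \to \chi_p$ pointwise on $X$, and since each $f_n \in C(X)$, we conclude $\chi_p \in B_1(X)$.

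The main thing to be careful about — the step I would flag as the crux — is the passage from "$X$ perfectly normal and $T_1$" to "there is a continuous $g$ with zero set exactly $\{p\}$." This is exactly the standard characterization of perfect normality (a normal space in which every closed set is a zero set), combined with $T_1$ to guarantee $\{p\}$ is closed; I would state this explicitly and cite it, since the rest of the argument is then a routine verification. Everything after that — continuity of $f_n$, the value at $p$, and the eventual vanishing away from $p$ — is elementary. One small point worth a sentence: the explicit formula $f_n = \max\{0, 1 - n g\}$ is continuous because $g$ is continuous and $\max$ and the affine maps are continuous, so no appeal to the lattice structure of $B_1(X)$ is even needed.
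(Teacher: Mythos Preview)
Your proof is correct but takes a different route from the paper. The paper does not construct an approximating sequence at all; instead it invokes the characterization (from the companion paper \cite{AA}) that $f \in B_1(X)$ precisely when $f^{-1}(U)$ is an $F_\sigma$ set for every open $U \subseteq \mathbb{R}$. Since $\chi_p$ takes only the values $0$ and $1$, the preimage of any open $U$ is one of $\emptyset$, $\{p\}$, $X\setminus\{p\}$, or $X$; perfect normality guarantees that the open set $X\setminus\{p\}$ is $F_\sigma$, and the other three are trivially $F_\sigma$. Your argument instead works directly from the definition of Baire one as a pointwise limit: you use the equivalent form of perfect normality (every closed set is a zero set of a continuous function) to obtain $g$ with $Z(g)=\{p\}$, and then the explicit sequence $f_n = (1 - n g)\vee 0$ converges pointwise to $\chi_p$. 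Your approach is more self-contained in that it avoids the $F_\sigma$ characterization of Baire class one, at the modest cost of citing the zero-set formulation of perfect normality; the paper's approach is essentially a two-line case check once that characterization is available.
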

\begin{proof}
For any open set $U$ of $\mathbb{R}$, 
	\[ \chi_p^{-1}(U)= \begin{cases} 
	X &$ if $0,1 \in U$$ \\
	
	X \setminus \{p\} & $ if $0 \in U$ but $1 \notin U$$\\
	\{p\}  & $ if $0 \notin U$ but $1 \in U$$\\
	\emptyset & $ if $0 \notin U$ but $1 \notin U$$.

	\end{cases}
	\]
	Since $X$ is a perfectly normal space, the open set $X \setminus \{p\}$ is a $F_{\sigma}$ set. Hence in any case  $\chi_p$ pulls back an open set to a $F_{\sigma}$ set. So $\chi_p$ is a Baire one function.
\end{proof}
\noindent In view of Theorem \ref{chiP} we obtain the following facts about any perfectly normal $T_1$ space.
\begin{obs}\label{Obs}
If $M$ is a maximal ideal of $B_1(X)$ where $X$ is a perfectly normal $T_1$ space then
\begin{enumerate}
\item For each $p \in X$ either $\chi_p \in M$ or $\chi_p - 1 \in M$.\\
 This follows from $\chi_p (\chi_p -1)=0 \in M$ and $M$ is prime.
 \item If $\chi_p -1 \in M$ then $\chi_q \in M$ for all $q \neq p$.\\
 For if $\chi_q-1 \in M$ for some $q \neq p$ then $Z(\chi_p-1), Z(\chi_q-1) \in Z_B[M]$. This implies $\emptyset=Z(\chi_p-1) \cap Z(\chi_q-1) \in Z_B[M]$ which contradicts that $Z_B[M]$ is a $Z_B$-ultrafilter.
 \item $M$ is fixed if and only if $\chi_p-1 \in M$ for some $p \in X$.\\
 If $M$ is fixed then $M=\widehat{M_p}$ for some $p \in X$ and therefore, $\chi_p-1 \in M$. Conversely let $\chi_p-1 \in M$ for some $p \in X$. Then $\{p\}=Z(\chi_p-1) \in Z_B[M]$ shows that $M$ is fixed.
 \item $M$ is free if and only if $M$ contains $\{\chi_p : p \in X\}$.\\
 Follows from Observation (3).
\end{enumerate}
\end{obs}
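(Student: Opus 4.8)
The statement to prove is Observation \ref{Obs}, which collects four facts about maximal ideals $M$ of $B_1(X)$ when $X$ is a perfectly normal $T_1$ space. The common thread is Theorem \ref{chiP}, which guarantees that each $\chi_p$ lies in $B_1(X)$, together with the prime/$Z_B$-ideal machinery built up in Section 2 and the characterization of fixed maximal ideals in Theorem \ref{fixed_max}.

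The plan is to prove the four parts in order, since each relies on the previous. For part (1), the key algebraic identity is $\chi_p(\chi_p - 1) = 0$: evaluating at $x = p$ gives $1 \cdot 0 = 0$, and at $x \neq p$ gives $0 \cdot (-1) = 0$. Since $M$ is maximal, it is prime (every maximal ideal is prime, and by the remarks following Theorem \ref{eqv4} maximal ideals in $B_1(X)$ are $Z_B$-ideals), so from $\chi_p(\chi_p-1)\in M$ we get $\chi_p \in M$ or $\chi_p - 1 \in M$. For part (2), suppose $\chi_p - 1 \in M$ and also $\chi_q - 1 \in M$ for some $q \neq p$. Then $Z(\chi_p - 1) = \{p\}$ and $Z(\chi_q - 1) = \{q\}$ both lie in $Z_B[M]$, which is a $Z_B$-ultrafilter by Theorem \ref{p2thm2.8}; but $\{p\} \cap \{q\} = \emptyset \in Z_B[M]$ contradicts the filter axiom $\emptyset \notin Z_B[M]$. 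Hence for $q \neq p$ we cannot have $\chi_q - 1 \in M$, so by part (1), $\chi_q \in M$.

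For part (3), if $M$ is fixed, then by Theorem \ref{fixed_max} we have $M = \widehat{M}_p$ for some $p \in X$, and since $(\chi_p - 1)(p) = 0$ we get $\chi_p - 1 \in \widehat{M}_p = M$. Conversely, if $\chi_p - 1 \in M$ for some $p$, then $\{p\} = Z(\chi_p - 1) \in Z_B[M]$, so $p \in \bigcap Z_B[M]$, which is therefore nonempty, i.e. $M$ is fixed. Part (4) is then a contrapositive bookkeeping step: $M$ is free iff $M$ is not fixed iff (by part (3)) $\chi_p - 1 \notin M$ for every $p \in X$ iff (by part (1)) $\chi_p \in M$ for every $p \in X$, i.e. $\{\chi_p : p \in X\} \subseteq M$.

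I do not anticipate a genuine obstacle here — all the substantive work is already done in Theorem \ref{chiP} (membership of $\chi_p$ in $B_1(X)$, which needs perfect normality to make $X\setminus\{p\}$ an $F_\sigma$), in Theorem \ref{p2thm2.8} (that $Z_B[M]$ is a $Z_B$-ultrafilter), and in Theorem \ref{fixed_max} (the classification of fixed maximal ideals). The only point requiring a little care is invoking primeness of $M$: one should note explicitly that $T_1$-ness ensures $\chi_p$ is well-defined and distinct from the constants, and that the identity $\chi_p(\chi_p-1)=0$ holds as an identity of functions on all of $X$. Beyond that, the argument is a short, direct chain of implications, essentially as sketched in the four bullet points accompanying the statement.
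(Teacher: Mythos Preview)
Your proposal is correct and follows essentially the same route as the paper: part (1) via the identity $\chi_p(\chi_p-1)=0$ and primeness of $M$, part (2) via the disjoint singleton zero sets contradicting the $Z_B$-filter property, part (3) via Theorem~\ref{fixed_max} and $Z(\chi_p-1)=\{p\}$, and part (4) as the contrapositive. If anything, you are slightly more explicit than the paper in part (4), correctly noting that both (1) and (3) are needed to pass from ``$\chi_p-1\notin M$ for all $p$'' to ``$\chi_p\in M$ for all $p$''.
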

\noindent The following theorem ensures the existence of free maximal ideals in $B_1(X)$ where $X$ is any infinite perfectly normal $T_1$ space. 
\begin{theorem}
	For a perfectly normal space $T_{1}$ space $X$, the following statements are equivalent:
	\begin{enumerate}
		\item $X$ is finite.
		\item Every maximal ideal in $B_1(X)$ is fixed.
		\item Every ideal in $B_1(X)$ is fixed.
	\end{enumerate}
\end{theorem}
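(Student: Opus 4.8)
The plan is to prove the cyclic chain of implications $(1) \Rightarrow (2) \Rightarrow (3) \Rightarrow (1)$, which is the standard strategy for an equivalence of three statements. The implication $(3) \Rightarrow (2)$ is trivial since every maximal ideal is in particular an ideal, so really the work is in $(1) \Rightarrow (3)$ and $(2) \Rightarrow (1)$; I will route through $(2)$ only to match the stated ordering, using the trivial step $(2) \Rightarrow (3)$ — wait, that is false in general — so instead I organize it as $(1) \Rightarrow (3) \Rightarrow (2) \Rightarrow (1)$, where $(3) \Rightarrow (2)$ holds because a maximal ideal is an ideal, hence fixed by $(3)$.

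First I would prove $(1) \Rightarrow (3)$. Suppose $X = \{p_1, \dots, p_n\}$ is finite and let $I$ be any (proper) ideal of $B_1(X)$. By Theorem~\ref{p2thm2.5}, $Z_B[I]$ is a $Z_B$-filter on $X$, so it has the finite intersection property and in particular $\bigcap Z_B[I] \neq \emptyset$ provided the intersection of the finitely many distinct members is nonempty — but since $X$ is finite, $Z(B_1(X))$ is a finite collection, so $Z_B[I]$ is a finite family, and its total intersection equals a finite intersection of its members, which lies in $Z_B[I]$ by property (2) of a $Z_B$-filter and hence is nonempty by property (1). Therefore $\bigcap Z[I] \neq \emptyset$, i.e. $I$ is fixed.

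Next, $(3) \Rightarrow (2)$ is immediate: a maximal ideal of $B_1(X)$ is in particular an ideal, so if every ideal is fixed then every maximal ideal is fixed. Finally I would prove $(2) \Rightarrow (1)$ by contraposition: assume $X$ is infinite and exhibit a free maximal ideal. Here is where the perfect normality and $T_1$ hypotheses do the heavy lifting via Observation~\ref{Obs}: since $X$ is infinite, the family $\{\chi_p : p \in X\}$ (each $\chi_p \in B_1(X)$ by Theorem~\ref{chiP}) has the property that the zero sets $Z(\chi_p) = X \setminus \{p\}$ form a family with the finite intersection property (any finite intersection $\bigcap_{i=1}^k (X \setminus \{p_i\}) = X \setminus \{p_1,\dots,p_k\}$ is nonempty because $X$ is infinite). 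Hence $\{X \setminus \{p\} : p \in X\}$ extends to a $Z_B$-ultrafilter $\mathscr U$ on $X$, and $\bigcap \mathscr U \subseteq \bigcap_{p \in X}(X \setminus \{p\}) = \emptyset$, so $\mathscr U$ is free. Then $Z_B^{-1}[\mathscr U]$ is a maximal ideal containing every $\chi_p$, hence by Observation~\ref{Obs}(4) it is free, contradicting $(2)$.

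The main obstacle is the last step: one must be careful that the $Z_B$-ultrafilter $\mathscr U$ extending $\{X \setminus \{p\}\}$ really is free, and that the corresponding maximal ideal is genuinely free in the sense of the definition ($\bigcap Z[I] = \emptyset$), which follows because $Z_B[Z_B^{-1}[\mathscr U]] = \mathscr U$ and $\bigcap \mathscr U = \emptyset$. One should also double-check that $X$ being $T_1$ is what guarantees $\{p\}$ is closed so that $X \setminus \{p\}$ is open, and perfect normality then makes it $F_\sigma$, which is precisely what Theorem~\ref{chiP} needs; these are already packaged in the cited observation, so invoking it is legitimate. A cleaner alternative for $(2) \Rightarrow (1)$ that avoids ultrafilters entirely is to take the ideal generated by $\{\chi_p : p \in X\}$, check it is proper (a finite combination $\sum g_i \chi_{p_i}$ vanishes at any point outside $\{p_1,\dots,p_k\}$, so it cannot be a unit as $X$ is infinite), extend it to a maximal ideal $M$, and note $M$ contains all $\chi_p$ so is free by Observation~\ref{Obs}(4); I would likely present this version as it is shorter.
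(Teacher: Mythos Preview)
Your proof is correct but organized differently from the paper's. The paper runs the cycle $(1)\Rightarrow(2)\Rightarrow(3)\Rightarrow(1)$: for $(1)\Rightarrow(2)$ it notes that a finite $T_1$ space is discrete and compact, whence $B_1(X)=C(X)$ and all maximal ideals of $C(X)$ are fixed; for $(2)\Rightarrow(3)$ it simply says ``obvious'' (any proper ideal $I$ lies in a maximal ideal $M$, and $I\subseteq M$ gives $\bigcap Z_B[I]\supseteq\bigcap Z_B[M]\neq\emptyset$) --- so your aside that $(2)\Rightarrow(3)$ is ``false in general'' is actually mistaken, though this does not affect your chosen route; for $(3)\Rightarrow(1)$ it assumes $X$ infinite and exhibits the explicit free ideal $I=\{f\in B_1(X):\overline{X\setminus Z(f)}\text{ is finite}\}$, verifying it is proper and using $\chi_p\in I$ with $\chi_p(p)\neq 0$ to witness freeness at each $p$. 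Your argument instead handles $(1)\Rightarrow(3)$ by a direct finite-filter observation (avoiding any appeal to compactness or $C(X)$-theory), and for $(2)\Rightarrow(1)$ you go further than the paper needs by producing a free \emph{maximal} ideal, either via a $Z_B$-ultrafilter extending $\{X\setminus\{p\}:p\in X\}$ or by extending the ideal generated by $\{\chi_p:p\in X\}$. Your approach is more self-contained and yields the stronger intermediate conclusion; the paper's is slightly more concrete in that it names the free ideal explicitly without invoking Zorn's lemma.
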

\begin{proof}
	$(1) \implies (2)$: Since a finite $T_{1}$ space is discrete, $C(X)=B_1(X)=X^{\mathbb{R}}$. $X$ being finite, it is compact and therefore all the maximal ideals of $C(X)$ $\big(=B_1(X) \big)$ are fixed.\\
	$(2) \implies (3)$: Proof obvious.\\
	$(3) \implies (1)$: Suppose $X$ is infinite. We shall show that there exists a free (proper) ideal in $B_1(X)$.\\
	Consider $I= \{f \in B_1(X): \overline{X\setminus Z(f)}$ is finite$\}$ (Here finite includes $\emptyset$).\\
	Of course $I \neq \emptyset$, as $\mathbf{0} \in I$. Since $X$ is infinite, $\mathbf{1} \notin I$ and so, $I$ is proper. We show that, $I$ is an ideal in $B_1(X)$. Let $f,g \in  I$. Then $\overline{X \setminus Z(f)}$ and $\overline{X \setminus Z(g)}$ are both finite. Now $\overline{X \setminus Z(f-g)} \subseteq \overline{X \setminus Z(f)} \cup \overline{X \setminus Z(g)}$ implies that $\overline{X \setminus Z(f-g)}$ is finite. Hence $f-g \in I$. Similarly, $\overline{X \setminus Z(f.g)} \subseteq \overline{X \setminus Z(f)}$ for any $f \in I$ and $g \in B_1(X)$. So, $\overline{X \setminus Z(f.g)}$ is finite and hence $f.g \in I$. Therefore, $I$ is an ideal in $B_1(X)$. We claim that $I$ is free.\\
	For any $p \in X$, consider $\chi_p:X \rightarrow \mathbb{R}$ given by
	\[ \chi_p(x)= \begin{cases} 
	1 &$ if $x=p$$ \\

	0 & $ otherwise.  $	\end{cases}
	\]
	Using Theorem \ref{chiP}, $\chi_p$ is a Baire one function. Also, $\overline{X \setminus Z(\chi_p)}= \overline{X \setminus(X\setminus \{p\})} = \overline{\{p\}}= \{p\}=$ finite and $\chi_p(p) \neq 0$. Hence, $I$ is free.
\end{proof}

\section{Residue class ring of $B_1(X)$ modulo ideals} 
\noindent An ideal $I$ in a partially ordered ring $A$ is called \textbf{convex} if for all $a,b,c \in A$ with $a \leq b \leq c$ and $c, a \in I \implies b \in I$. Equivalently, for all $a,b \in A, 0\leq a \leq b$ and $b \in I \implies a \in I$.\\
\noindent If $A$ is a lattice ordered ring then an ideal $I$ of $A$ is called \textbf{absolutely convex} if for all $a,b \in A$, $|a| \leq |b|$ and $b \in I \implies a \in I$.

\begin{example}
	If $t: B_1(X) \rightarrow B_1(Y)$ is a ring homomorphism, then $\ker t$ is an absolutely convex ideal.
\end{example}
\begin{proof}
	Let $g \in \ker t$ and $|f|\leq |g|$, where $f \in B_1(X)$. $g \in \ker t \implies t(g)=0 \implies t(|g|)=|t(g)|=0$. Since any ring homomorphism $t: B_1(X) \rightarrow B_1(Y)$ preserves the order, $t(|f|)=0 \implies |t(f)|=0 \implies t(f)=0 \implies f \in \ker t$.
\end{proof}
\noindent Let $I$ be an ideal in $B_1(X)$. In what follows we shall denote any member of the quotient ring $B_1(X)/I$ by $I(f)$ for $f \in B_1(X)$. i.e., $I(f)= f+I$. Now we begin with two well known theorems.
\begin{theorem} \cite{GJ} \label{orderdef}
	Let $I$ be an ideal in a partially ordered ring $A$. The corresponding quotient ring $A/I$ is a partially ordered ring if and only if $I$ is convex, where the partial order is given by
	$
	I(a) \geq 0 \ \textnormal{iff} \ \exists$ $x \in A \ \textnormal{ such that } x \geq 0 \textnormal{ and } a \equiv x (\mod I).
	$
\end{theorem}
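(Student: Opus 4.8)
The statement in question is the classical ``order on the quotient ring'' theorem, Theorem~\ref{orderdef}, attributed to \cite{GJ}. I will sketch the two directions of the equivalence and then check that the displayed relation is genuinely a partial order.

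The plan is to argue both implications directly from the definitions. First I would assume $A/I$ is a partially ordered ring and derive convexity of $I$. Suppose $0 \le a \le b$ with $b \in I$; in the quotient, $I(b) = 0$, and since the quotient map is a ring homomorphism preserving the given relation, $I(0) \le I(a) \le I(b)$, i.e. $0 \le I(a) \le 0$. Antisymmetry of the partial order on $A/I$ forces $I(a) = 0$, hence $a \in I$; this is exactly convexity in the ``equivalently'' form stated just before the theorem. For the converse, assume $I$ is convex and define the relation on $A/I$ by the displayed rule: $I(a) \ge 0$ iff there exists $x \in A$ with $x \ge 0$ and $a - x \in I$. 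I would then verify, in order, that this relation (i) is well defined on cosets, (ii) is reflexive, (iii) is transitive, (iv) is antisymmetric, and (v) is compatible with addition and with multiplication by nonnegative elements — the five conditions packaged in ``partially ordered ring.'' Well-definedness is immediate since the condition $a - x \in I$ only references the coset $I(a)$. Reflexivity ($I(0) \ge 0$, taking $x = 0$) and compatibility with addition (add the two witnesses) are routine. For multiplication: if $I(a) \ge 0$ and $I(b) \ge 0$ with witnesses $x, y \ge 0$, then $xy \ge 0$ and $ab - xy = (a-x)b + x(b-y) \in I$ since $I$ is an ideal, so $I(a)I(b) \ge 0$.

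The step I expect to be the main obstacle is antisymmetry, and this is precisely where convexity is used (everything else goes through for an arbitrary ideal). Suppose $I(a) \ge 0$ and $-I(a) = I(-a) \ge 0$. Then there are $x, y \ge 0$ in $A$ with $a - x \in I$ and $-a - y \in I$. Adding gives $-(x + y) \in I$, so $x + y \in I$. Now $0 \le x \le x + y$ (using $y \ge 0$ and the compatibility of the order on $A$ with addition), and $x + y \in I$, so convexity yields $x \in I$; hence $a = (a - x) + x \in I$, i.e. $I(a) = 0$. This shows the relation is antisymmetric, and transitivity is the easy addition argument. Once these are in place, the order on $A$ being compatible with its ring operations transfers to $A/I$ via the witnesses, completing the ``if'' direction.

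Two small remarks I would fold in for cleanliness: the relation $I(a) \le I(b)$ is to be read as $I(b) - I(a) = I(b-a) \ge 0$, so all the ring-compatibility checks reduce to statements about elements that are $\ge 0$; and the hypothesis that $A$ itself is a partially ordered ring is what guarantees, e.g., that $x, y \ge 0 \implies x + y \ge 0$ and $xy \ge 0$, which the multiplication and addition steps silently invoke. Since the result is standard and recorded in \cite{GJ}, a short proof along these lines — or simply a citation — suffices; I would present the antisymmetry computation explicitly because it is the only place the convexity hypothesis does real work.
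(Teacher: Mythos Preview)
Your proof is correct, and in fact goes well beyond what the paper does: the paper gives no proof of this theorem at all, introducing it as one of ``two well known theorems'' and simply citing \cite{GJ}. Your sketch is the standard argument, with the antisymmetry step correctly identified as the one place where convexity is essential; the remark that ``simply a citation suffices'' is exactly the route the paper takes.
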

\begin{theorem} \cite{GJ} \label{equiv}
	On a convex ideal $I$ in a lattice-ordered ring $A$ the following conditions are equivalent.
	\begin{enumerate}
		\item $I$ is absolutely convex.
		\item $x \in I$ implies $|x| \in I.$
		\item $x, y \in I$ implies $x \vee y \in I.$
		\item $I(a \vee b)= I(a) \vee I(b)$, whence $A/I$ is a lattice ordered ring.
		\item $\forall$ $a \in A$, $I(a) \geq 0$ iff $I(a)=I(|a|)$.
		\end{enumerate}
\end{theorem}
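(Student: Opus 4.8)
The plan is to prove the cycle $(1)\Rightarrow(2)\Rightarrow(3)\Rightarrow(4)\Rightarrow(5)\Rightarrow(1)$, together with the immediate reverse implication $(3)\Rightarrow(2)$, using throughout the standard identities valid in any lattice-ordered group: $|x|=x\vee(-x)$, translation invariance $z+(x\vee y)=(z+x)\vee(z+y)$, hence $x\vee y=y+\bigl((x-y)\vee 0\bigr)$ with $0\le(x-y)\vee 0\le|x-y|$, and sub-additivity $(p+r)\vee(q+s)\le(p\vee q)+(r\vee s)$. Convexity of $I$ enters only through the elementary remark that $0\le w\le c\in I$ forces $w\in I$, and consequently that $|w|\le c\in I$ also forces $w\in I$, because then $0\le w+|w|\le 2|w|\le 2c$.

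The easy steps come first. For $(1)\Rightarrow(2)$, apply absolute convexity to $a=|x|$, $b=x$, noting $|a|=|x|=|b|$. For $(3)\Rightarrow(2)$, use $|x|=x\vee(-x)$ and that $x\in I$ implies $-x\in I$. For $(2)\Rightarrow(3)$, given $x,y\in I$ we have $x-y\in I$, hence $|x-y|\in I$ by $(2)$; since $0\le(x-y)\vee 0\le|x-y|$, convexity gives $(x-y)\vee 0\in I$, whence $x\vee y=y+\bigl((x-y)\vee 0\bigr)\in I$. So $(2)$ and $(3)$ are equivalent, and both follow from $(1)$.

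The heart of the argument is $(3)\Rightarrow(4)$. I would first prove a descent lemma: under $(3)$, if $c\equiv c'\pmod{I}$ then $c\vee d\equiv c'\vee d\pmod{I}$ for all $d$. Indeed $c\le c'+|c-c'|$ gives $c\vee d\le(c'\vee d)+|c-c'|$ by monotonicity and sub-additivity of $\vee$, and symmetrically $c'\vee d\le(c\vee d)+|c-c'|$, so $\bigl|(c\vee d)-(c'\vee d)\bigr|\le|c-c'|\in I$ (using $(2)$), and convexity puts $(c\vee d)-(c'\vee d)$ in $I$. Granting this, I claim $I(a\vee b)$ is the supremum of $I(a)$ and $I(b)$ for the order of Theorem~\ref{orderdef}. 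It is an upper bound since $a\vee b-a\ge 0$ and $a\vee b-b\ge 0$. For minimality, let $I(c)\ge I(a)$ and $I(c)\ge I(b)$; by definition of the order there are $x,y\ge 0$ with $c\equiv a+x$ and $c\equiv b+y$. Then $(a+x)\vee(b+y)\ge a\vee b$, so $z:=(a+x)\vee(b+y)-(a\vee b)\ge 0$, while the descent lemma applied in each variable gives $(a+x)\vee(b+y)\equiv c\vee(b+y)\equiv c\vee c=c\pmod{I}$. Hence $c-(a\vee b)\equiv z\ge 0$, i.e. $I(c)\ge I(a\vee b)$. This establishes $I(a\vee b)=I(a)\vee I(b)$; since the quotient order is translation invariant and carries products of non-negative elements to non-negative elements, $A/I$ is then a lattice-ordered ring.

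It remains to close the cycle. For $(4)\Rightarrow(5)$: in the $\ell$-ring $A/I$ one has $I(|a|)=I\bigl(a\vee(-a)\bigr)=I(a)\vee\bigl(-I(a)\bigr)=\bigl|I(a)\bigr|$, and in any lattice-ordered group $|t|=t$ exactly when $t\ge 0$; so $I(a)=I(|a|)$ iff $I(a)\ge 0$. For $(5)\Rightarrow(1)$: first $(5)$ yields $(2)$, since $x\in I$ makes $I(x)=0\ge 0$, whence $I(|x|)=I(x)=0$ and $|x|\in I$; now if $|a|\le|b|$ with $b\in I$ then $|b|\in I$, so $|a|\in I$ by convexity, and from $a\le|a|$ and $-a\le|a|$ we get $0\le I(a)\le 0$ in $A/I$, so $I(a)=0$ by antisymmetry (valid because $I$ is convex), i.e. $a\in I$. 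The only delicate points are the descent lemma and the verification that $I(a\vee b)$ is the least upper bound rather than merely an upper bound; the remaining implications are short manipulations of order identities.
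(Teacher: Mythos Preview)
The paper does not give its own proof of this theorem; it is quoted verbatim from Gillman--Jerison \cite{GJ} and stated without argument. Your proof is correct and is essentially the classical one found there: the cycle $(1)\Rightarrow(2)\Rightarrow(3)\Rightarrow(4)\Rightarrow(5)\Rightarrow(1)$, with the only substantive work being the descent lemma and the least-upper-bound verification in $(3)\Rightarrow(4)$.

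One small point of tidiness: your opening remark that convexity alone yields ``$|w|\le c\in I$ forces $w\in I$, because then $0\le w+|w|\le 2|w|\le 2c$'' is a hair short as written. That chain only gives $w+|w|\in I$; you still need $|w|\in I$ to subtract. Of course $0\le|w|\le c$ gives $|w|\in I$ directly by convexity, and then $w=(w+|w|)-|w|\in I$, so the fix is one line. Alternatively, bypass the sum entirely via $0\le w\vee 0\le|w|\le c$ and $0\le(-w)\vee 0\le|w|\le c$, together with $w=(w\vee 0)-((-w)\vee 0)$. You in fact give a clean version of this same fact later, in your $(5)\Rightarrow(1)$ step, using the quotient order; it would be cleaner to invoke that form up front.
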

\begin{remark}
	For an absolutely convex ideal $I$ of $A$, $I(|a|)=I(a \vee -a)=I(a) \vee I(-a)=|I(a)|$, $\forall$ $a \in A$.
\end{remark}
\begin{theorem}
	Every $Z_B$-ideal in $B_1(X)$ is absolutely convex.
\end{theorem}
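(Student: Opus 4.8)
The plan is to argue directly from the definition of a $Z_B$-ideal, bypassing Theorem~\ref{equiv} altogether. Let $I$ be a $Z_B$-ideal in $B_1(X)$ and suppose $a,b \in B_1(X)$ satisfy $|a| \le |b|$ with $b \in I$; the goal is to conclude $a \in I$.

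The first step is the elementary observation that $Z(b) \subseteq Z(a)$: if $x \in Z(b)$ then $|a(x)| \le |b(x)| = 0$, which forces $a(x) = 0$, i.e.\ $x \in Z(a)$. The second step uses that $I$ is a $Z_B$-ideal: from $b \in I$ we get $Z(b) \in Z_B[I]$. Since $a \in B_1(X)$ we have $Z(a) \in Z(B_1(X))$, and $Z(b) \subseteq Z(a)$, so the superset axiom in the definition of a $Z_B$-filter, applied to the $Z_B$-filter $Z_B[I]$, gives $Z(a) \in Z_B[I]$. The final step invokes the $Z_B$-ideal property once more: $Z(a) \in Z_B[I]$ forces $a \in I$, which completes the argument.

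Note that this simultaneously shows $I$ is convex (given $0 \le a \le b$ with $b \in I$, apply the above with $|a| = a \le b = |b|$), so $I$ is absolutely convex in the sense of the definition in Section~4. I do not expect a genuine obstacle here: the statement is a two-line consequence of the $Z_B$-filter/$Z_B$-ideal duality developed in Section~2, and the only point worth flagging is that the pointwise condition $|a| \le |b|$ is faithfully reflected, at the level of zero-sets, by the single inclusion $Z(b) \subseteq Z(a)$ — precisely the sort of reduction that correspondence is built to exploit. An alternative write-up would instead verify conditions (1) and (2) of Theorem~\ref{equiv} (convexity, together with $x \in I \Rightarrow |x| \in I$, the latter because $Z(|x|) = Z(x)$), but the direct route above is shorter and self-contained.
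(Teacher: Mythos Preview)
Your proof is correct and follows essentially the same route as the paper: from $|a|\le|b|$ deduce $Z(b)\subseteq Z(a)$, use that $Z_B[I]$ is a $Z_B$-filter to get $Z(a)\in Z_B[I]$, and conclude $a\in I$ by the $Z_B$-ideal property. The only difference is notation ($a,b$ versus $f,g$) and your added remarks on convexity and the alternative via Theorem~\ref{equiv}, which are fine but not needed.
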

\begin{proof}
	Suppose $I$ is any $Z_B$-ideal and $|f| \leq |g|$, where $g \in I$ and $f \in B_1(X)$. Then $Z(g) \subseteq Z(f)$. Since $g \in I$, it follows that $Z(g) \in Z_B[I]$, hence $Z(f) \in Z_B[I]$. Now $I$ being a $Z_B$-ideal, $f \in I$.
\end{proof}
\begin{corollary}
	In particular every maximal ideal in $B_1(X)$ is absolutely convex.
\end{corollary}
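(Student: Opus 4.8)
\emph{Plan of proof.} The corollary is an immediate consequence of the preceding theorem once we recall that every maximal ideal of $B_1(X)$ is a $Z_B$-ideal. So the first and only real step is to invoke this fact, which was recorded earlier: right after the definition of a $Z_B$-ideal it was observed (item (1) in that list) that every maximal ideal in $B_1(X)$ is a $Z_B$-ideal. The underlying reason is that for a maximal ideal $M$ one always has $M \subseteq Z_B^{-1}[Z_B[M]]$, the right-hand side is a proper ideal (since $Z_B[M]$ is a $Z_B$-filter, so $\emptyset \notin Z_B[M]$, forcing $\mathbf{1} \notin Z_B^{-1}[Z_B[M]]$), and maximality of $M$ then gives $M = Z_B^{-1}[Z_B[M]]$, i.e.\ $M$ is a $Z_B$-ideal.

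Having this, the second step is simply to apply the theorem just proved, namely that every $Z_B$-ideal in $B_1(X)$ is absolutely convex: take any maximal ideal $M$, note it is a $Z_B$-ideal, and conclude it is absolutely convex. Concretely, if $f \in B_1(X)$, $g \in M$, and $|f| \le |g|$, then $Z(g) \subseteq Z(f)$; since $Z(g) \in Z_B[M]$ and $Z_B[M]$ is a $Z_B$-filter, $Z(f) \in Z_B[M]$, and because $M$ is a $Z_B$-ideal, $f \in M$.

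I do not anticipate any obstacle here; the content of the corollary is entirely carried by the two facts being combined, and the verification that maximal ideals are $Z_B$-ideals has already been done in the text, so a one-line proof citing the theorem and that observation suffices.

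\begin{proof}
	Every maximal ideal in $B_1(X)$ is a $Z_B$-ideal, as observed after the definition of a $Z_B$-ideal. Hence, by the preceding theorem, every maximal ideal in $B_1(X)$ is absolutely convex.
\end{proof}
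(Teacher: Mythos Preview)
Your proof is correct and follows exactly the intended reasoning: the paper states this as an immediate corollary of the preceding theorem together with the earlier observation that every maximal ideal in $B_1(X)$ is a $Z_B$-ideal, and gives no separate proof. Your one-line argument is precisely what is needed.
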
 
\begin{theorem}
	For every maximal ideal $M$ in $B_1(X)$, the quotient ring $B_1(X)/M$ is a lattice ordered field.
\end{theorem}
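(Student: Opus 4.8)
The plan is to assemble the quotient structure from three ingredients already established: the maximality of $M$, the absolute convexity of $M$, and Theorems~\ref{orderdef} and~\ref{equiv}. First I would observe that since $B_1(X)$ is a commutative ring with unity and $M$ is a maximal ideal, the quotient $B_1(X)/M$ is a field by elementary commutative algebra; this settles the ``field'' half of the claim and uses nothing special about $B_1(X)$.

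Next I would invoke the corollary just proved, namely that every maximal ideal of $B_1(X)$ is absolutely convex, and note that absolute convexity implies convexity: if $0 \leq a \leq b$ with $b \in M$, then $|a| \leq |b|$, so $a \in M$. Convexity of $M$ is precisely the hypothesis needed in Theorem~\ref{orderdef}, which then guarantees that $B_1(X)/M$ is a partially ordered ring under the order $M(f) \geq 0$ iff $f \equiv g \pmod{M}$ for some $g \geq 0$ in $B_1(X)$.

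To promote this partial order to a lattice order I would apply Theorem~\ref{equiv}: since $M$ is absolutely convex, condition~(4) of that theorem holds, that is, $M(f \vee g) = M(f) \vee M(g)$ for all $f,g \in B_1(X)$, whence $B_1(X)/M$ is a lattice ordered ring. Putting the two conclusions together --- $B_1(X)/M$ is at once a field and a lattice ordered ring --- gives exactly that $B_1(X)/M$ is a lattice ordered field.

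I do not anticipate any real obstacle; the statement is essentially a synthesis of the preceding corollary with the two quoted results from \cite{GJ}. The only point deserving a word of care is spelling out that ``absolutely convex'' entails ``convex'', so that Theorem~\ref{orderdef} genuinely applies; everything else is bookkeeping. (If one wished to go further and show the induced order is total, one could appeal to the classical fact that every lattice ordered field is totally ordered, but the statement as phrased does not demand this.)
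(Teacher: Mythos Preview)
Your proposal is correct and is exactly the argument the paper intends: the paper's own proof reads simply ``Proof is immediate,'' relying on the preceding corollary (every maximal ideal is absolutely convex) together with Theorems~\ref{orderdef} and~\ref{equiv}. You have merely spelled out the details of that immediate synthesis, including the observation that absolute convexity implies convexity.
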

\begin{proof}
	Proof is immediate.
\end{proof}
\noindent The following theorem is a characterization of the non-negative elements in the lattice ordered ring $B_1(X)/I$, where $I$ is a $Z_B$-ideal.
\begin{theorem} \label{alter}
Let $I$ be a $Z_B$-ideal in $B_1(X)$. For $f \in B_1(X)$, $I(f) \geq 0$ if and only if there exists $Z \in Z_B[I]$ such that $f \geq 0 $ on $Z$.
\end{theorem}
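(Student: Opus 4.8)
The plan is to reduce everything to the order description supplied by Theorem~\ref{orderdef}, namely that $I(f)\ge 0$ precisely when there is some $g\in B_1(X)$ with $g\ge 0$ and $f-g\in I$, and then to exploit the lattice structure of $B_1(X)$ together with the $Z_B$-ideal hypothesis.

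For the forward implication I would start from a witness: if $I(f)\ge 0$, then by Theorem~\ref{orderdef} there is $g\in B_1(X)$ with $g\ge 0$ and $h:=f-g\in I$. Since $h\in I$ we have $Z(h)\in Z_B[I]$, and on $Z(h)$ the functions $f$ and $g$ agree, so $f=g\ge 0$ on $Z(h)$. Taking $Z=Z(h)$ finishes this direction; note this half does not even use that $I$ is a $Z_B$-ideal, only Theorem~\ref{p2thm2.5}.

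For the converse, suppose $Z\in Z_B[I]$ with $f\ge 0$ on $Z$. The candidate witness is $g=f\vee 0$, which lies in $B_1(X)$ because $B_1(X)$ is a lattice, and clearly $g\ge 0$. Using the lattice identity $(f\vee 0)+(f\wedge 0)=f$ we get $f-g=f\wedge 0$, so it suffices to show $f\wedge 0\in I$. Here is where the hypotheses come in: on $Z$ we have $f\ge 0$, hence $f\wedge 0=0$ on $Z$, i.e.\ $Z\subseteq Z(f\wedge 0)$; since $Z\in Z_B[I]$, $Z(f\wedge 0)\in Z(B_1(X))$ and $Z_B[I]$ is a $Z_B$-filter (closed upward within $Z(B_1(X))$ by Theorem~\ref{p2thm2.5}), we conclude $Z(f\wedge 0)\in Z_B[I]$; and since $I$ is a $Z_B$-ideal this yields $f\wedge 0\in I$. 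Therefore $f-(f\vee 0)\in I$ with $f\vee 0\ge 0$, so $I(f)=I(f\vee 0)\ge 0$ by Theorem~\ref{orderdef}.

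I do not anticipate a serious obstacle: the only subtlety is remembering that $I$ being a $Z_B$-ideal is used exactly once, to pull $f\wedge 0$ back into $I$ from the fact that its zero set is a superset of a member of $Z_B[I]$, and that the superset-closure of $Z_B[I]$ is what lets $Z(f\wedge 0)$ enter the filter in the first place. One should also record at the outset that $f\vee 0$ and $f\wedge 0$ belong to $B_1(X)$, which is immediate since $B_1(X)$ is lattice ordered, and that $Z_B$-ideals are absolutely convex (already proved above) is not needed here but is consistent with the conclusion.
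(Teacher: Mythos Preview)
Your proof is correct and follows essentially the same strategy as the paper's: in both directions one exhibits a nonnegative function congruent to $f$ modulo $I$ whose zero set of difference with $f$ lies in $Z_B[I]$. The only cosmetic difference is the choice of witness---the paper uses $|f|$ (invoking Theorem~\ref{equiv}(5)) whereas you use $f\vee 0$ and appeal directly to the order definition in Theorem~\ref{orderdef}; since $Z(f-|f|)=Z(f\wedge 0)=\{x:f(x)\ge 0\}$, the two arguments are in fact tracking the same zero set.
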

\begin{proof}
Let $I(f) \geq 0$. By condition $(5)$ of Theorem \ref{equiv}, we write $I(f)=I(|f|)$. So, $f-|f| \in I \implies Z(f-|f|) \in Z_B[I]$ and $f \geq 0$ on $Z(f-|f|)$.\\
Conversely, let $f \geq 0$ on some  $Z \in Z_B[I]$. Then $f=|f|$ on $Z \implies Z \subseteq Z(f-|f|) \implies Z(f-|f|) \in Z_B[I]$. $I$ being a $Z_B$-ideal we get $f-|f| \in I$, which means $I(f)=I(|f|)$. But $|f| \geq 0$ on $Z$ gives $I(|f|) \geq 0$. Hence, $I(f) \geq 0$.
\end{proof}
\begin{theorem} \label{strict}
Let $I$ be any $Z_B$-ideal and $f \in B_1(X)$. If there exists $Z \in Z_B[I]$ such that $f(x) > 0$, for all $x$ $\in Z$, then $I(f) >0$. 
\end{theorem}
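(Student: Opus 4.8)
The statement asks us to upgrade the weak inequality $I(f)\geq 0$ to the strict one $I(f)>0$, so the plan is to establish the two things that "$I(f)>0$" abbreviates: that $I(f)\geq 0$ and that $I(f)\neq 0$ (equivalently $f\notin I$). For the first, I would simply note that the hypothesis $f(x)>0$ for all $x\in Z$ gives in particular $f\geq 0$ on $Z$, so Theorem~\ref{alter} applies verbatim and yields $I(f)\geq 0$.

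For the second part, I would argue by contradiction: suppose $f\in I$. Since $I$ is a $Z_B$-ideal, $f\in I$ forces $Z(f)\in Z_B[I]$. Now $Z_B[I]$ is a $Z_B$-filter on $X$ (Theorem~\ref{p2thm2.5}), hence is closed under finite intersections and omits $\emptyset$; therefore $Z(f)\cap Z\in Z_B[I]$ and in particular $Z(f)\cap Z\neq\emptyset$. Pick any point $x_0\in Z(f)\cap Z$. Then $f(x_0)=0$ because $x_0\in Z(f)$, while $f(x_0)>0$ because $x_0\in Z$ and $f$ is strictly positive on $Z$ by hypothesis. This contradiction shows $f\notin I$, i.e.\ $I(f)\neq 0$.

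Combining the two observations, $I(f)\geq 0$ and $I(f)\neq 0$ in the lattice-ordered field (or ring) $B_1(X)/I$, which is exactly $I(f)>0$. I do not anticipate a genuine obstacle here; the only point requiring a moment's care is the step $f\in I\Rightarrow Z(f)\in Z_B[I]$, which is precisely the defining property of a $Z_B$-ideal, together with the finite intersection property of the $Z_B$-filter $Z_B[I]$ that rules out $Z(f)\cap Z=\emptyset$. Everything else is a direct appeal to Theorem~\ref{alter}.
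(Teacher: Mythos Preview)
Your argument is correct and mirrors the paper's own proof: both invoke Theorem~\ref{alter} for $I(f)\geq 0$ and then use that $Z\cap Z(f)=\emptyset$ together with the filter property of $Z_B[I]$ to conclude $f\notin I$; you simply phrase the second step as a proof by contradiction rather than directly. One small remark: the implication $f\in I\Rightarrow Z(f)\in Z_B[I]$ holds for any ideal by the very definition of $Z_B[I]$, so you need not invoke the $Z_B$-ideal hypothesis there (that hypothesis is used only through Theorem~\ref{alter}).
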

\begin{proof}
	By Theorem~\ref{alter}, $I(f) \geq 0$. But $Z \cap Z(f) = \emptyset$ and $Z \in Z_B[I] \implies Z(f) \notin Z_B[I] \implies f \notin I \implies I(f) \neq 0 \implies I(f) >0$.
\end{proof}
\noindent The next theorem shows that the converse of the above theorem holds if the ideal is a maximal ideal in $B_1(X)$.
\begin{theorem}\label{next2strict}
	Let $M$ be any maximal ideal in $B_1(X)$ and $M(f) >0$ for some $f \in B_1(X)$ then there exists $Z \in Z_B[M]$ such that $ f >0 $ on $Z$.
\end{theorem}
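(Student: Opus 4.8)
The plan is to read off the conclusion from Theorem~\ref{alter} together with the defining property of a maximal ideal, and essentially nothing more is needed. Recall first that a maximal ideal $M$ of $B_1(X)$ is a $Z_B$-ideal, so Theorem~\ref{alter} is available with $I=M$. The hypothesis $M(f)>0$ unpacks as: $M(f)\ge 0$ and $f\notin M$. Applying Theorem~\ref{alter} to the relation $M(f)\ge 0$, I would obtain a zero set $Z_0\in Z_B[M]$ on which $f\ge 0$. So the only thing still missing is a member of $Z_B[M]$ on which $f$ is \emph{nowhere} zero; intersecting it with $Z_0$ will finish the proof.

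To produce such a set, I would use maximality. Since $f\notin M$ and $M$ is maximal, the ideal generated by $M$ and $f$ is all of $B_1(X)$, so $\mathbf{1}=m+gf$ for suitable $m\in M$ and $g\in B_1(X)$. Then $\mathbf{1}-gf=m\in M$, hence $W:=Z(\mathbf{1}-gf)\in Z_B[M]$, and for each $x\in W$ the equality $g(x)f(x)=1$ forces $f(x)\ne 0$. Equivalently, one may argue that $f\notin M$ and $M$ a $Z_B$-ideal give $Z(f)\notin Z_B[M]$, and then use that $Z_B[M]$ is a $Z_B$-ultrafilter to choose $W\in Z_B[M]$ with $W\cap Z(f)=\emptyset$; the two routes are interchangeable, and both genuinely invoke the maximality of $M$.

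Finally I would set $Z:=Z_0\cap W$. Since $Z_B[M]$ is a $Z_B$-filter it is closed under finite intersections, so $Z\in Z_B[M]$; and on $Z$ we have $f\ge 0$ (because $Z\subseteq Z_0$) together with $f\ne 0$ (because $Z\subseteq W$), whence $f>0$ on $Z$, which is exactly the assertion.

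I do not anticipate a real obstacle: the theorem is little more than a repackaging of Theorem~\ref{alter}. The one point that must be handled with care is the construction of $W$, where the \emph{maximality} of $M$ is used and not merely that $M$ is a $Z_B$-ideal. Without that hypothesis the statement can fail --- for instance, for the $Z_B$-ideal $\{f\in B_1(\mathbb{R}):f(0)=f(1)=0\}$ and $f(x)=x$ one has $I(f)>0$ (since $f\ge 0$ on $[0,\infty)\in Z_B[I]$ and $f\notin I$), yet every $Z\in Z_B[I]$ contains $0$, where $f$ vanishes --- which is why the maximality of $M$ cannot be dropped.
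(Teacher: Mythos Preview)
Your proof is correct and follows essentially the same route as the paper: apply Theorem~\ref{alter} to obtain $Z_0\in Z_B[M]$ with $f\ge 0$ on $Z_0$, use maximality to find some $W\in Z_B[M]$ (the paper writes it as $Z(g)$ for a suitable $g\in M$) disjoint from $Z(f)$, and intersect. Your second route for constructing $W$ via the $Z_B$-ultrafilter property is exactly what the paper does, and the added counterexample confirming that maximality cannot be dropped is a nice touch.
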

\begin{proof}
	By Theorem \ref{alter}, there exists $Z_1 \in Z_B[M]$ such that $f \geq 0$ on $Z_1$. Now $M(f) >0 \implies f \notin M$ which implies that there exists $g$ $\in M$, such that $Z(f) \cap Z(g)= \emptyset$. Choosing $Z=Z_1 \cap Z(g)$, we observe $Z \in Z_B[M]$ and $f(x) >0$, for all $x \in Z$.
\end{proof}
\begin{corollary}
	For a maximal ideal $M$ of $B_1(X)$ and for some $f \in B_1(X)$, $M(f)>0$ if and only if there exists $Z \in Z_B[M]$ such that $f(x)>0$ on $Z$.
\end{corollary}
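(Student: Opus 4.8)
The plan is to recognize that this corollary is nothing more than the conjunction of the two immediately preceding results, Theorem~\ref{strict} and Theorem~\ref{next2strict}, together with the fact (recorded earlier in this section, and also a consequence of Theorem~\ref{p2thm2.8}) that every maximal ideal in $B_1(X)$ is a $Z_B$-ideal. So the proof will simply assemble these pieces into a biconditional; no new machinery is needed.

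For the ``if'' direction, I would argue as follows. Suppose there exists $Z \in Z_B[M]$ with $f(x) > 0$ for all $x \in Z$. Since the maximal ideal $M$ is in particular a $Z_B$-ideal, it satisfies the hypotheses of Theorem~\ref{strict} with $I = M$, and that theorem gives $M(f) > 0$ at once. For the ``only if'' direction, suppose $M(f) > 0$. This is precisely the hypothesis of Theorem~\ref{next2strict}, whose conclusion is exactly the existence of a $Z \in Z_B[M]$ with $f > 0$ on $Z$. Combining the two implications yields the stated equivalence.

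I do not anticipate any obstacle: the substantive work — producing, from $M(f)>0$, a set in the $Z_B$-ultrafilter on which $f$ is strictly positive, by intersecting the set from Theorem~\ref{alter} with the cozero set of a witness $g \in M$ — has already been carried out in Theorem~\ref{next2strict}, and the reverse implication is immediate from Theorem~\ref{strict}. The only point worth stating explicitly in the write-up is the reduction of the maximal ideal $M$ to the case of a $Z_B$-ideal, so that Theorem~\ref{strict} applies verbatim.
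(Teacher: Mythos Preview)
Your proposal is correct and matches the paper's treatment exactly: the corollary is stated without proof because it is simply the conjunction of Theorem~\ref{strict} (for the ``if'' direction, using that every maximal ideal is a $Z_B$-ideal) and Theorem~\ref{next2strict} (for the ``only if'' direction). There is nothing to add.
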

\noindent Now we show Theorem~\ref{next2strict} doesn't hold for every non-maximal ideal $I$.
\begin{theorem}
	Suppose $I$ is any non-maximal $Z_B$-ideal in $B_1(X)$. There exists $f \in B_1(X)$ such that $I(f)>0$ but $f$ is not strictly positive on any $Z \in Z_B[I]$.
\end{theorem}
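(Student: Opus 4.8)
The plan is to exploit the hypothesis that $I$ is \emph{not} maximal in order to manufacture a zero-set which meets every member of $Z_B[I]$ yet does not itself belong to $Z_B[I]$; the square of a Baire one function whose zero-set is that witness will turn out to be the required $f$.

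First I would unpack "non-maximal". Since $I$ is a $Z_B$-ideal we have $I = Z_B^{-1}[Z_B[I]]$, and under the bijection between $Z_B$-ideals and $Z_B$-filters the maximal ideals correspond precisely to the $Z_B$-ultrafilters. Hence $Z_B[I]$ fails to be a $Z_B$-ultrafilter, so by the very definition of a $Z_B$-ultrafilter there is a $Z_B$-filter $\mathscr F$ on $X$ with $Z_B[I] \subsetneq \mathscr F$. I would then pick $Z_0 \in \mathscr F \setminus Z_B[I]$. Because $\mathscr F$ has the finite intersection property and contains both $Z_0$ and every member of $Z_B[I]$, the set $Z_0$ meets every member of $Z_B[I]$, while $Z_0 \notin Z_B[I]$.

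Next, write $Z_0 = Z(g)$ for some $g \in B_1(X)$ and set $f = g^2 \in B_1(X)$. Then $f \ge 0$ on all of $X$, hence $f \ge 0$ on any $Z \in Z_B[I]$, so Theorem~\ref{alter} gives $I(f) \ge 0$. On the other hand $Z(f) = Z(g^2) = Z(g) = Z_0 \notin Z_B[I]$, and since $I$ is a $Z_B$-ideal this forces $f \notin I$, that is $I(f) \neq 0$; as $B_1(X)/I$ is a partially ordered (indeed lattice ordered) ring, being the quotient by an absolutely convex ideal, $I(f) \ge 0$ and $I(f) \neq 0$ together give $I(f) > 0$. Finally, were $f$ strictly positive on some $Z \in Z_B[I]$, we would get $Z \cap Z(f) = Z \cap Z_0 = \emptyset$, contradicting that $Z_0$ meets every member of $Z_B[I]$. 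Hence $f$ is not strictly positive on any $Z \in Z_B[I]$, and $f$ has all the required properties.

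The only step that needs genuine care is the first one: extracting from "$I$ non-maximal" a zero-set $Z_0$ that is, so to speak, "almost in" $Z_B[I]$ — meeting everything in it without being a member. This rests entirely on the dictionary already set up in the paper ($Z_B$-ideals versus $Z_B$-filters, maximal ideals versus $Z_B$-ultrafilters) plus the elementary observation that a $Z_B$-filter which is not an ultrafilter is properly contained in a larger $Z_B$-filter. Once $Z_0$, equivalently $g$, is in hand, the verification that $f = g^2$ works is routine.
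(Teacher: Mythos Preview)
Your argument is correct and is essentially the paper's own proof transported across the ideal/filter duality: where the paper picks a proper ideal $J \supsetneq I$ and an element $h \in J \setminus I$, then works with $h^2$, you pick a $Z_B$-filter $\mathscr F \supsetneq Z_B[I]$ and a zero-set $Z_0 \in \mathscr F \setminus Z_B[I]$, then work with $g^2$ where $Z(g)=Z_0$. The verification that the square is strictly positive in the quotient yet has a zero on every $Z \in Z_B[I]$ is identical in both versions.
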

\begin{proof}
	Since $I$ is non-maximal, there exists a proper ideal $J$ of $B_1(X)$ such that $I \subsetneqq J$. Choose $f \in J \smallsetminus I$. $f^2 \notin I \implies I(f^2) >0$. Choose any $Z \in Z_B[I]$. Certainly, $Z \in Z_B[J]$ and so, $Z \cap Z(f^2) \in Z_B[J] \implies Z \cap Z(f^2) \neq \emptyset$. So $f$ is not strictly positive on the whole of $Z$.
\end{proof}
\noindent In what follows, we characterize the ideals $I$ in $B_1(X)$ for which $B_1(X)/I$ is a totally ordered ring.
\begin{theorem}
	Let $I$ be a $Z_B$-ideal in $B_1(X)$, then the lattice ordered ring $B_1(X)/I$ is totally ordered ring if and only if $I$ is a prime ideal.
\end{theorem}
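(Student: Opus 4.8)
The plan is to prove both implications via the characterization of the partial order on $B_1(X)/I$ given in Theorem~\ref{alter}. Recall that a lattice ordered ring is totally ordered precisely when every element is comparable with $0$, i.e., for each $f \in B_1(X)$, either $I(f) \geq 0$ or $I(f) \leq 0$ (equivalently $I(-f) \geq 0$). So the whole argument reduces to translating this dichotomy into a statement about zero sets in $Z_B[I]$, and then recognizing that statement as condition $(4)$ of Theorem~\ref{eqv4}.

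For the forward direction, suppose $B_1(X)/I$ is totally ordered. Given any $f \in B_1(X)$, either $I(f) \geq 0$ or $I(-f) \geq 0$. In the first case, Theorem~\ref{alter} gives $Z \in Z_B[I]$ with $f \geq 0$ on $Z$; in the second case it gives $Z \in Z_B[I]$ with $-f \geq 0$, i.e.\ $f \leq 0$, on $Z$. Either way $f$ does not change sign on some member of $Z_B[I]$, which is exactly condition $(4)$ of Theorem~\ref{eqv4}. Since $I$ is a $Z_B$-ideal, that theorem yields that $I$ is prime.

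For the converse, suppose $I$ is a prime $Z_B$-ideal. Let $f \in B_1(X)$ be arbitrary; I must show $I(f) \geq 0$ or $I(f) \leq 0$. By condition $(4)$ of Theorem~\ref{eqv4} (valid since $I$ is a prime $Z_B$-ideal), there exists $Z \in Z_B[I]$ on which $f$ does not change sign, say $f \geq 0$ on $Z$ (the case $f \leq 0$ on $Z$ is symmetric, applying the same reasoning to $-f$). Then Theorem~\ref{alter} immediately gives $I(f) \geq 0$. Hence every element of $B_1(X)/I$ is comparable with $0$, and since $B_1(X)/I$ is already a lattice ordered ring (Theorem~\ref{equiv}, as $I$ is absolutely convex being a $Z_B$-ideal), it is totally ordered.

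I do not anticipate a genuine obstacle here: the two nontrivial facts are Theorem~\ref{alter} and the equivalence $(1)\!\iff\!(4)$ in Theorem~\ref{eqv4}, both of which are available. The one point requiring a little care is making sure that "comparable with $0$ for every element" really does give a total order on a lattice ordered ring --- this is standard, following from translation-invariance of the order ($I(g) \leq I(h) \iff I(h-g) \geq 0$) --- and that the "$f$ does not change sign" phrasing in Theorem~\ref{eqv4} is correctly split into the two cases $f \geq 0$ on $Z$ and $f \leq 0$ on $Z$, each handled by Theorem~\ref{alter} applied to $f$ or to $-f$ respectively.
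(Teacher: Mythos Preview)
Your proof is correct and follows essentially the same route as the paper: reduce ``totally ordered'' to the dichotomy $I(f)\geq 0$ or $I(-f)\geq 0$, translate this via Theorem~\ref{alter} into the existence of some $Z\in Z_B[I]$ on which $f$ does not change sign, and identify that with condition~(4) of Theorem~\ref{eqv4}. The paper compresses these steps into a single chain of biconditionals, while you spell out each direction and the auxiliary remark about absolute convexity, but the logical content is identical.
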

\begin{proof}
$B_1(X)/I$ is a totally ordered ring if and only if for any $f \in B_1(X)$, $I(f) \geq 0$ or $I(- f) \leq 0$ if and only if for all $f \in B_1(X)$, there exists $Z \in Z_B[I]$ such that $f$ does not change its sign on $Z$ if and only if $I$ is a prime ideal (by Theorem \ref{eqv4}).	
\end{proof}
\begin{corollary}
	 For every maximal ideal $M$ in $B_1(X)$, $B_1(X)/M$ is a totally ordered field.
\end{corollary}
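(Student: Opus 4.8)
The plan is to read off this corollary from the theorem immediately preceding it, which states that for a $Z_B$-ideal $I$ the lattice ordered ring $B_1(X)/I$ is totally ordered precisely when $I$ is prime. So the whole task reduces to checking that a maximal ideal $M$ of $B_1(X)$ satisfies the hypotheses of that theorem and, in addition, yields a field as quotient.

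First I would note that $M$, being a maximal ideal in the commutative ring $B_1(X)$ with unity, is prime. Next, recalling the properties of $Z_B$-ideals recorded earlier — in particular that every maximal ideal in $B_1(X)$ is a $Z_B$-ideal — I conclude that $M$ is a prime $Z_B$-ideal. Hence the preceding theorem applies and $B_1(X)/M$ is a totally ordered ring; the underlying partial order is genuinely defined here because a maximal ideal is absolutely convex (shown above), so Theorem~\ref{orderdef} guarantees that $B_1(X)/M$ is a partially ordered ring to begin with, and Theorem~\ref{eqv4} supplies the trichotomy that upgrades this to a total order.

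Finally, since $M$ is maximal in $B_1(X)$, the quotient $B_1(X)/M$ is a field, as already observed when it was identified as a lattice ordered field. Combining the two conclusions, $B_1(X)/M$ is at once a field and a totally ordered ring, i.e. a totally ordered field, which is exactly the assertion. There is essentially no obstacle in this argument; the only point requiring (minimal) care is confirming that $M$ meets both hypotheses of the cited theorem — primeness and being a $Z_B$-ideal — each of which is immediate from results stated above.
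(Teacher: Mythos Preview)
Your argument is correct and matches the paper's intent exactly: the corollary is stated without proof because it is immediate from the preceding theorem once one notes that a maximal ideal of $B_1(X)$ is both prime and a $Z_B$-ideal, and that maximality makes the quotient a field. There is nothing to add or amend.
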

\begin{theorem}
Let $M$ be a maximal ideal in $B_1(X)$. The function $\Phi: \mathbb{R} \rightarrow B_1(X)/M$ (respectively, $\Phi:\mathbb{R} \rightarrow B_1^*(X)/M$) defined by $\Phi(r)=M(\mathbf{r})$, for all $r \in \mathbb{R}$, where $\mathbf{r}$ denotes the constant function with value $r$, is an order preserving monomorphism.	
\end{theorem}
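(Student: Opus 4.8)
The plan is to factor $\Phi$ through the natural embedding of $\mathbb{R}$ into $B_1(X)$ (respectively $B_1^*(X)$) as the subring of constant functions, and then to use that $\mathbb{R}$ is a field together with the order structure on the quotient established earlier in this section. So the first step is to observe that $\iota:\mathbb{R}\to B_1(X)$, $r\mapsto \mathbf r$, is a ring homomorphism sending $1$ to the unit $\mathbf 1$, and that $\Phi$ is precisely the composite of $\iota$ with the canonical projection $B_1(X)\to B_1(X)/M$; hence $\Phi$ is automatically a ring homomorphism with $\Phi(1)=M(\mathbf 1)$. Since every ideal considered here is proper, $\mathbf 1\notin M$, so $\Phi(1)\neq 0$ in $B_1(X)/M$.

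Next I would deduce injectivity: $\ker\Phi$ is then a proper ideal of $\mathbb{R}$, and as $\mathbb{R}$ is a field this forces $\ker\Phi=\{0\}$, so $\Phi$ is a monomorphism. The same two steps apply verbatim with $B_1^*(X)$ in place of $B_1(X)$, since constant functions are bounded, so $\iota$ also maps $\mathbb{R}$ into $B_1^*(X)$.

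For order preservation, recall that $M$, being maximal, is a $Z_B$-ideal and hence absolutely convex, so by Theorem~\ref{orderdef} the quotient $B_1(X)/M$ carries the partial order under which $M(a)\geq 0$ whenever $a\geq 0$ in $B_1(X)$. Then if $r\leq s$ in $\mathbb{R}$, the difference $\mathbf s-\mathbf r$ is the constant function $s-r\geq 0$, hence $\mathbf s-\mathbf r\geq\mathbf 0$ pointwise on $X$, and therefore $\Phi(s)-\Phi(r)=M(\mathbf s-\mathbf r)\geq 0$, i.e.\ $\Phi(r)\leq\Phi(s)$. (If one prefers to argue through zero sets, Theorem~\ref{alter} applies since $X=Z(\mathbf 0)\in Z_B[M]$ and $\mathbf s-\mathbf r\geq 0$ on all of $X$.) Once more the argument is identical for $B_1^*(X)/M$.

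There is essentially no hard step here; the only points needing care are that $\Phi(1)\neq 0$ — which is exactly where properness of $M$ is used, and which is what makes the kernel a proper (hence trivial) ideal of $\mathbb{R}$ — and the identification of the order on the quotient as the one in which images of non-negative elements are non-negative, which is legitimate because maximal ideals of $B_1(X)$ are absolutely convex, as noted earlier in this section.
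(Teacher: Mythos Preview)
Your proof is correct and follows essentially the same approach as the paper. The only cosmetic differences are that the paper argues injectivity directly (a nonzero constant in $M$ would contradict properness) rather than via ``$\ker\Phi$ is a proper ideal of a field,'' and the paper invokes Theorem~\ref{strict} with $Z=X$ to obtain the strict inequality $M(\mathbf r)>M(\mathbf s)$ for $r>s$ in one step, whereas you derive the non-strict inequality from the definition of the quotient order and let injectivity upgrade it; both routes are equally valid and equally short.
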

\begin{proof}
	It is clear from the definitions of addition and multiplication of the residue class ring $B_1(X)/M$ that the function is a homomorphism. \\
	To show $\phi$ is injective. Let $M(\mathbf{r})=M(\mathbf{s})$ for some $r,s \in \mathbb{R}$ with $r \neq s$. Then $\mathbf{r}-\mathbf{s} \in M$. This contradicts to the fact that $M$ is a proper ideal. Hence $M(\mathbf{r}) \neq M(\mathbf{s})$, when $r \neq s$.\\ 
	Let $r,s \in \mathbb{R}$ with $r >s$. Then $r-s >0$. The function $\mathbf{r}-\mathbf{s}$ is strictly positive on $X$. Since $X \in Z(B_1(X))$, by Theorem \ref{strict}, $M(\mathbf{r}-\mathbf{s})>0 \implies M(\mathbf{r}) > M(\mathbf{s}) \implies \Phi(r)> \Phi(s)$. Thus $\Phi$ is an order preserving monomorphism.
\end{proof}
\noindent For a maximal ideal $M$ in $B_1(X)$, the residue class field $B_1(X)/M$ (respectively $B_1^*(X)/M$) can be considered as an extension of the field $\mathbb{R}$.
\begin{definition}
The maximal ideal $M$ of $B_1(X)$ (respectively, $B_1^*(X)$) is called real if $\Phi(\mathbb{R})=B_1(X)/M$ (respectively, $\Phi(\mathbb{R})=B_1^*(X)/M$) and in such case $B_1(X)/M$ is called \textbf{real} residue class field. If $M$ is not real then it is called \textbf{hyper-real} and $B_1(X)/M$ is called hyper-real residue class field.
\end{definition}
\begin{definition} \cite{GJ}
	A totally ordered field $F$ is called \textbf{archimedean} if given $\alpha \in F$, there exists $n \in \mathbb{N}$ such that $n > \alpha$. If $F$ is not archimedean then it is called \textbf{non-archimedean}.
\end{definition}
\noindent If $F$ is a non-archimedean ordered field then there exists some $\alpha \in F$ such that $\alpha > n$, for all $n \in \mathbb{N}$. Such an $\alpha$ is called an infinitely large element of $F$ and $\frac{1}{\alpha}$ is called infinitely small element of $F$ which is characterized by the relation $0<\frac{1}{\alpha}<\frac{1}{n}$, $\forall$ $n \in \mathbb{N}$. The existence of an infinitely large (equivalently, infinitely small) element in $F$ assures that $F$ is non-archimedean.\\
\noindent In the context of archimedean field, the following is an important theorem available in the literature.
\begin{theorem} \cite{GJ}
	A totally ordered field is archimedean iff it is isomorphic to a subfield of the ordered field $\mathbb{R}$ .
\end{theorem}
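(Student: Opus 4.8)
The plan is to prove the two implications separately, with the forward direction (archimedean $\Rightarrow$ embeddable in $\mathbb{R}$) carrying essentially all the work. Throughout I would use that every ordered field has characteristic $0$ and hence a canonical prime subfield which I identify with $\mathbb{Q}$, and that any field isomorphism between ordered fields carries this copy of $\mathbb{Q}$ to the corresponding one.

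First the easy direction: suppose $F$ is order-isomorphic to a subfield $K$ of $\mathbb{R}$ via $\psi$. Given $\alpha \in F$, pick $n \in \mathbb{N}$ with $\psi(\alpha) < n$ in $\mathbb{R}$, which is possible since $\mathbb{R}$ is archimedean; applying the order-preserving map $\psi^{-1}$ gives $\alpha < n$ in $F$, so $F$ is archimedean. For the converse, assume $F$ is archimedean and define $\phi : F \to \mathbb{R}$ by
\[
\phi(\alpha) = \sup\{\, q \in \mathbb{Q} : q < \alpha \,\},
\]
the supremum taken in $\mathbb{R}$. The first task is well-definedness: the archimedean hypothesis gives $n \in \mathbb{N}$ with $-n < \alpha < n$, so the set on the right is nonempty and bounded above, hence the sup exists by Dedekind completeness of $\mathbb{R}$. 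One notes at once that $\phi$ restricts to the identity on $\mathbb{Q}$.

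The crux is to check that $\phi$ is a ring homomorphism, and for this I would first record the density of $\mathbb{Q}$ in $F$: if $\alpha < \beta$, archimedeanness produces $n$ with $1/n < \beta-\alpha$, and since the set of integers $m$ with $m \le n\alpha$ is nonempty and bounded above it has a largest element $m_0$, whence $\alpha < (m_0+1)/n < \beta$. Using density, additivity $\phi(\alpha+\beta) = \phi(\alpha)+\phi(\beta)$ follows by squeezing both sides between sums of rationals lying just below $\alpha$ and $\beta$; multiplicativity $\phi(\alpha\beta) = \phi(\alpha)\phi(\beta)$ is handled first for $\alpha,\beta > 0$ by an analogous squeeze and then extended to all of $F$ via $\phi(-\alpha) = -\phi(\alpha)$. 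Density also shows $\phi$ is strictly order-preserving: $\alpha < \beta$ yields rationals $q_1 < q_2$ with $\alpha < q_1 < q_2 < \beta$, so $\phi(\alpha) \le q_1 < q_2 \le \phi(\beta)$. Finally, a field homomorphism is automatically injective (its kernel is a proper ideal of the field $F$), so $\phi$ is an order isomorphism of $F$ onto the subfield $\phi(F)$ of $\mathbb{R}$, completing the proof.

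I expect the main obstacle to be the verification that $\phi$ respects multiplication: the squeeze estimate is less symmetric than for addition, since one must control signs and ensure that products of rationals close to $\alpha$ and $\beta$ remain close to $\alpha\beta$, and the reduction to the positive case through negation must be justified carefully. Everything else is routine bookkeeping resting on the single fact that the archimedean hypothesis makes $\mathbb{Q}$ dense in $F$.
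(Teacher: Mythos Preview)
Your proof is correct and follows the standard Dedekind-cut embedding argument. However, there is nothing to compare against: the paper does not supply its own proof of this theorem. The statement is simply quoted from Gillman and Jerison \cite{GJ} as a known classical result and is used as a black box to conclude that real residue class fields $B_1(X)/M$ are archimedean. So your write-up is not an alternative to the paper's proof but rather fills in a reference the authors chose to cite rather than reproduce.

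As a minor remark on presentation: in the density step you should make explicit that the well-ordering (equivalently, the existence of a greatest integer $m_0$ with $m_0 \le n\alpha$) uses the archimedean hypothesis on both sides to bound the relevant set of integers above and below; you say this implicitly but it is the only place the hypothesis enters that step. The multiplicativity reduction to positive elements via $\phi(-\alpha)=-\phi(\alpha)$ is fine once you have additivity, since $\phi(-\alpha)+\phi(\alpha)=\phi(0)=0$. None of this is a gap, just points where a reader might want one more line.
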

\noindent We thus get that the real residue class field $B_1(X)/M$ is archimedean if $M$ is a real maximal ideal of $B_1(X)$.

\begin{theorem}
	Every hyper-real residue class field $B_1(X)/M$ is non-archimedean.
\end{theorem}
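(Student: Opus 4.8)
The plan is to prove the contrapositive: if $B_1(X)/M$ is archimedean, then $M$ is real, which by the definition of a hyper-real maximal ideal gives the statement. The two facts I would lean on are both already available above: the residue class ring $B_1(X)/M$ is a totally ordered field (it is a field since $M$ is maximal, and totally ordered by the corollary preceding this theorem), and the map $\Phi:\mathbb{R}\to B_1(X)/M$, $\Phi(r)=M(\mathbf{r})$, is an order-preserving monomorphism. The third ingredient is the quoted theorem of Gillman--Jerison: a totally ordered field is archimedean if and only if it is order-isomorphic to a subfield of $\mathbb{R}$.

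First I would assume $K:=B_1(X)/M$ is archimedean and fix, by that quoted theorem, an order-isomorphism $\psi:K\to F$ onto a subfield $F$ of $\mathbb{R}$. Then I would look at the composite $\psi\circ\Phi:\mathbb{R}\to F\subseteq\mathbb{R}$, which is an order-preserving ring monomorphism of $\mathbb{R}$ into itself. The core observation is that any such map must be $\mathrm{id}_{\mathbb{R}}$: it sends $1$ to $1$, hence fixes $\mathbb{Z}$ and then $\mathbb{Q}$ pointwise, and for an arbitrary real $x$ and rationals $p<x<q$, monotonicity forces $p\le(\psi\circ\Phi)(x)\le q$, so $(\psi\circ\Phi)(x)=x$ by density of $\mathbb{Q}$. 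Consequently $F=(\psi\circ\Phi)(\mathbb{R})=\mathbb{R}$, so $\psi$ is onto $\mathbb{R}$; applying $\psi^{-1}$ to the equality $(\psi\circ\Phi)(\mathbb{R})=\mathbb{R}$ yields $\Phi(\mathbb{R})=\psi^{-1}(\mathbb{R})=K$, i.e. $M$ is real. This contradicts the hypothesis that $M$ is hyper-real, so $K$ cannot be archimedean.

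The only step needing genuine care — the crux — is the identification $\psi\circ\Phi=\mathrm{id}_{\mathbb{R}}$; everything else is formal bookkeeping with the isomorphism $\psi$ and the embedding $\Phi$. If one prefers to avoid invoking the subfield-embedding theorem, there is an alternative direct route I would sketch as a remark: since $M$ is hyper-real, pick $f\in B_1(X)$ with $M(f)\notin\Phi(\mathbb{R})$ and replace $f$ by $|f|$, so that $\alpha:=M(|f|)>0$ and still $\alpha\notin\Phi(\mathbb{R})$ (because $|M(f)|=\Phi(r)$ would force $M(f)=\Phi(\pm r)$). If $\alpha$ were bounded above by some $\Phi(n)$, then the Dedekind cut $A=\{r\in\mathbb{Q}:\Phi(r)\le\alpha\}$ is nonempty and bounded above, and using total order together with the assumed archimedean property one shows $\alpha=\Phi(\sup A)\in\Phi(\mathbb{R})$, a contradiction; hence $\alpha>\Phi(n)$ for every $n\in\mathbb{N}$, exhibiting an infinitely large element and showing $K$ is non-archimedean. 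I would present the first argument as the official proof, since it is shortest given what has been established, and leave the cut argument as an aside.
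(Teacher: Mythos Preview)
Your proof is correct and follows essentially the same route as the paper: the paper's one-line proof simply cites the fact that the identity is the only nonzero ring homomorphism from $\mathbb{R}$ into itself, and your argument is precisely the unpacking of how that fact, combined with the archimedean-subfield theorem and the embedding $\Phi$, forces $\Phi$ to be surjective. You have spelled out the key step $\psi\circ\Phi=\mathrm{id}_{\mathbb{R}}$ that the paper leaves implicit, so your write-up is in fact more complete than the original.
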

\begin{proof}
	Proof follows from the fact that the identity is the only non-zero homomorphism on the ring $\mathbb{R}$ into itself.
\end{proof}
\begin{corollary}
	A maximal ideal $M$ of $B_1(X)$ is hyper-real if and only if there exists $f \in B_1(X)$ such that $M(f)$ is an infinitely large member of $B_1(X)/M$.
\end{corollary}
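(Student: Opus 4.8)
The plan is to read this off directly from the preceding theorem together with the characterisation of archimedean ordered fields quoted just above it. The first step is to record the elementary but essential observation that every element of the field $B_1(X)/M$ has the form $M(f)$ for some $f \in B_1(X)$; consequently ``there exists an infinitely large member of $B_1(X)/M$'' is literally the same assertion as ``there exists $f \in B_1(X)$ with $M(f)$ infinitely large.'' This reduces the corollary to the statement: $M$ is hyper-real if and only if $B_1(X)/M$ is non-archimedean.

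Next I would establish that equivalence. One implication is exactly the previous theorem: if $M$ is hyper-real then $B_1(X)/M$ is non-archimedean. For the converse I would argue contrapositively. Suppose $M$ is not hyper-real, i.e. $M$ is real, so $\Phi(\mathbb{R}) = B_1(X)/M$; since $\Phi$ is an order isomorphism of $\mathbb{R}$ onto $B_1(X)/M$ and $\mathbb{R}$ is archimedean, $B_1(X)/M$ is archimedean. (Equivalently, to keep the argument self-contained: if $B_1(X)/M$ is archimedean, then by the quoted theorem it is order isomorphic to a subfield $K$ of $\mathbb{R}$ via some $\psi$; then $\psi \circ \Phi$ is a ring homomorphism $\mathbb{R} \to \mathbb{R}$, hence the identity, so $\psi \circ \Phi = \mathrm{id}_{\mathbb{R}}$, forcing $\Phi = \psi^{-1}$ to be surjective and $\Phi(\mathbb{R}) = B_1(X)/M$, i.e. $M$ is real.) Combining the two directions gives: $M$ is hyper-real $\iff$ $B_1(X)/M$ is non-archimedean.

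Finally I would invoke the definition of a non-archimedean totally ordered field recalled before the theorem: $B_1(X)/M$ is non-archimedean precisely when it contains an infinitely large element, namely some $\alpha \in B_1(X)/M$ with $\alpha > M(\mathbf{n})$ for all $n \in \mathbb{N}$. Writing $\alpha = M(f)$ with $f \in B_1(X)$, as in the first step, completes the proof. I do not expect any genuine obstacle; the only point requiring a moment's attention is that ``archimedean'' transfers correctly along the order isomorphism $\Phi$ (so that the embedding of $\mathbb{R}$ into $B_1(X)/M$ is exploited correctly), and the small set-theoretic remark that the only nonzero ring homomorphism $\mathbb{R} \to \mathbb{R}$ is the identity, which pins $\Phi$ down to be onto in the archimedean case.
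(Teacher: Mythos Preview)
Your proposal is correct and follows exactly the route the paper intends: the paper has already noted that a real maximal ideal yields an archimedean residue field, and the theorem immediately preceding the corollary gives the converse (hyper-real $\Rightarrow$ non-archimedean), so the corollary is just the equivalence ``$M$ hyper-real $\iff$ $B_1(X)/M$ non-archimedean'' rephrased via the definition of non-archimedean and the trivial observation that every element of $B_1(X)/M$ has the form $M(f)$. Your parenthetical alternative using the uniqueness of nonzero ring homomorphisms $\mathbb{R}\to\mathbb{R}$ is in fact precisely the argument the paper cites for the preceding theorem itself.
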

\begin{theorem}
	Each maximal ideal $M$ in $B_1^*(X)$ is real. 
\end{theorem}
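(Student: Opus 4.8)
The plan is to show that for any $f \in B_1^*(X)$, the coset $M(f)$ lies in $\Phi(\mathbb{R})$, i.e.\ $M(f) = M(\mathbf{r})$ for some real number $r$. The natural candidate for $r$ is forced by the order structure: since $f$ is bounded, the set $S = \{s \in \mathbb{R} : M(\mathbf{s}) \leq M(f)\}$ is nonempty and bounded above (if $|f| \leq k$ on $X$ then $-k \in S$ and no $s > k$ lies in $S$, because $\mathbf{s} - f > 0$ on $X$ forces $M(\mathbf{s}) > M(f)$ by Theorem~\ref{strict}). Set $r = \sup S$. I would then argue $M(f) = M(\mathbf{r})$, which since $\Phi$ is an order-preserving monomorphism and $B_1^*(X)/M$ is a totally ordered field (the corollaries above apply verbatim to $B_1^*(X)$), pins down $M(f)$ uniquely in $\Phi(\mathbb{R})$.

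The key step is ruling out $M(f) \neq M(\mathbf{r})$. Suppose $M(f) > M(\mathbf{r})$; then the positive field element $M(f) - M(\mathbf{r})$ is, by the archimedean-type reasoning, bounded below away from $0$ only if we can produce a positive rational lower bound — but that is exactly what we must avoid assuming, so instead I argue directly. By Theorem~\ref{next2strict} applied to $f - \mathbf{r}$, there exists $Z \in Z_B[M]$ with $f - r > 0$ on $Z$; but this only gives $M(f) > M(\mathbf{r})$, not a gap. To get the contradiction I would use boundedness: pick any $n \in \mathbb{N}$ and consider $f - \mathbf{r} - \tfrac{1}{n}\mathbf{1}$. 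If for every $n$ this function failed to change sign favourably, one would conclude $M(f) \geq M(\mathbf{r} + \tfrac1n)$ for all $n$, whence $r + \tfrac1n \in S$, contradicting $r = \sup S$. Thus for some $n$, $f - r - \tfrac1n$ is not everywhere positive on members of $Z_B[M]$; combined with Theorem~\ref{alter} and the definition of $r$ (so that $M(f) \geq M(\mathbf{r})$, hence $f - r \geq 0$ on some $Z' \in Z_B[M]$), one squeezes $M(f)$ between $M(\mathbf{r})$ and $M(\mathbf{r} + \tfrac1n)$ for every $n$. Since $\Phi$ is order preserving, $0 \leq M(f) - M(\mathbf{r}) \leq M(\tfrac1n \mathbf{1}) = \Phi(\tfrac1n)$ for all $n$, and because $B_1^*(X)/M$ is isomorphic (via $\Phi$) to a subfield of $\mathbb{R}$ on the image, the only element dominated by every $\Phi(\tfrac1n)$ and nonnegative is $0$. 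Hence $M(f) = M(\mathbf{r})$.

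The main obstacle I anticipate is the very last inference — that an element of $B_1^*(X)/M$ squeezed between $0$ and $\Phi(1/n)$ for all $n$ must be $0$. This is not automatic from the field axioms; it needs the archimedean property of $\Phi(\mathbb{R}) \cong \mathbb{R}$ together with the fact that the squeezed element actually lies in $\Phi(\mathbb{R})$ — which is what we are trying to prove, so the argument must be organized to avoid circularity. The clean way around this is to work with the $\sup$ definition of $r$ from the start: show that $r \in S$ (so $M(f) \geq M(\mathbf{r})$) and that $r$ cannot be bettered (so $M(f) \leq M(\mathbf{r} + \tfrac1n)$ fails to be strict improvement, forcing $M(f) \leq M(\mathbf{r})$ in the limit via the order-density of $\Phi(\mathbb{Q})$ inside $\Phi(\mathbb{R})$). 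Once both inequalities $M(f) \leq M(\mathbf{r})$ and $M(f) \geq M(\mathbf{r})$ are in hand, antisymmetry of the total order gives $M(f) = M(\mathbf{r}) \in \Phi(\mathbb{R})$, and since $f$ was arbitrary, $\Phi(\mathbb{R}) = B_1^*(X)/M$, i.e.\ $M$ is real.
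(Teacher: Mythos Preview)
Your proposal identifies its own fatal gap and never closes it. The sentence ``the only element dominated by every $\Phi(\tfrac1n)$ and nonnegative is $0$'' is exactly the assertion that $B_1^*(X)/M$ has no positive infinitesimals, i.e.\ is archimedean; and your attempted workaround via ``order-density of $\Phi(\mathbb Q)$ inside $\Phi(\mathbb R)$'' does not help, because density of a subfield says nothing about elements of the ambient field that lie \emph{outside} that subfield. Concretely: from $r=\sup S$ you can legitimately extract $M(\mathbf{r-\tfrac1n})\le M(f)< M(\mathbf{r+\tfrac1n})$ for every $n\in\mathbb N$, but in an arbitrary totally ordered field extension of $\mathbb R$ this is perfectly compatible with $M(f)\neq M(\mathbf r)$ --- the difference $M(f)-M(\mathbf r)$ would simply be a nonzero infinitesimal. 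Nothing in your argument rules that out, so the proof is circular at exactly the point you flagged.

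The paper's proof is a two-line archimedean argument that bypasses your $\sup$ construction entirely. Since $f\in B_1^*(X)$, there is $n\in\mathbb N$ with $|f|\le n$ on all of $X$, whence $|M(f)|=M(|f|)\le M(\mathbf n)$. As this holds for \emph{every} $f\in B_1^*(X)$, the field $B_1^*(X)/M$ contains no infinitely large element and is therefore archimedean. The earlier results in the paper (an archimedean ordered field embeds in $\mathbb R$, and the only nonzero ring homomorphism $\mathbb R\to\mathbb R$ is the identity) then force $\Phi$ to be surjective, i.e.\ $M$ is real. Note that this single inequality $|M(f)|\le M(\mathbf n)$ is precisely the missing ingredient in your argument: once you have it for every $f$, there are no infinitesimals (an infinitesimal would have an infinitely large reciprocal), and only then does your squeeze $0\le |M(f)-M(\mathbf r)|<M(\tfrac1n)$ force equality. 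So your route can be repaired, but only by first proving archimedeanness from boundedness --- at which point the $\sup$ machinery is no longer needed.
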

\begin{proof}
	It is equivalent to show that $B_1^*(X)/M$ is archimedean. Choose $f \in B_1^*(X)$. Then $|f(x)|\leq n$, for all $x \in X$ and for some $n \in \mathbb{N}$. i.e., $|M(f)|=M(|f|) \leq M(\mathbf{n})$. So there does not exist any infinitely large member in $B_1^*(X)/M$ and hence $B_1^*(X)/M$ is archimedean.
\end{proof}
\begin{corollary}
	If $X$ is a topological space such that $B_1(X)=B_1^*(X)$ then each maximal ideal in $B_1(X)$ is real.
\end{corollary}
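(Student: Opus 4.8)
The plan is to deduce this at once from the immediately preceding theorem, which asserts that every maximal ideal of $B_1^*(X)$ is real. First I would observe that the hypothesis $B_1(X)=B_1^*(X)$ is an equality of rings, so the two rings have literally the same ideals; in particular $M$ is a maximal ideal of $B_1(X)$ if and only if it is a maximal ideal of $B_1^*(X)$, and the residue class fields coincide: $B_1(X)/M=B_1^*(X)/M$. Since the embedding $\Phi:\mathbb{R}\to B_1(X)/M$ is defined via constant functions, which lie in $B_1^*(X)=B_1(X)$, it is the very same map in both pictures. Hence $\Phi(\mathbb{R})=B_1^*(X)/M=B_1(X)/M$ by the preceding theorem, which is exactly the statement that $M$ is real.

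Alternatively, if one prefers a self-contained argument that does not merely quote the previous theorem, I would reproduce its short proof verbatim in the present setting: given $f\in B_1(X)=B_1^*(X)$, boundedness of $f$ gives $|f(x)|\le n$ on $X$ for some $n\in\mathbb{N}$, so $|M(f)|=M(|f|)\le M(\mathbf{n})$; thus $B_1(X)/M$ contains no infinitely large element and is therefore an archimedean totally ordered field. By the cited theorem on archimedean fields it is isomorphic to a subfield of $\mathbb{R}$, and since $\Phi$ embeds $\mathbb{R}$ order-preservingly into it while the identity is the only nonzero endomorphism of $\mathbb{R}$, this forces $\Phi(\mathbb{R})=B_1(X)/M$.

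There is no real obstacle here: the corollary is a direct specialization of the previous theorem, the only (entirely routine) point to check being that an equality of the two function rings collapses the distinction between their maximal ideals and residue fields. I would keep the written proof to a single sentence pointing to that theorem.
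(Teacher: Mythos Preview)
Your proposal is correct and matches the paper's intent: the corollary is stated there without proof precisely because it is an immediate specialization of the preceding theorem that every maximal ideal of $B_1^*(X)$ is real, once one notes that $B_1(X)=B_1^*(X)$ identifies the two rings, their maximal ideals, their residue fields, and the map $\Phi$. Your one-sentence version is exactly what the paper has in mind; the alternative self-contained argument is fine but unnecessary here.
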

\noindent The following theorem shows how an unbounded Baire one function $f$ on $X$ is related to an infinitely large member of the residue class field $B_1(X)/M$.
\begin{theorem} \label{3eqv}
	Given a maximal ideal $M$ of $B_1(X)$ and $f \in B_1(X)$, the following statements are equivalent:
	\begin{enumerate}
		\item $|M(f)|$ is infinitely large member in $B_1(X)/M$.
		\item $f$ is unbounded on each zero set in $Z_B[M]$.
		\item for all $n \in \mathbb{N}$, $Z_n=\{x \in X: |f(x)| \geq n\} \in Z_B[M]$.
	\end{enumerate}
\end{theorem}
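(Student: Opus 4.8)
The plan is to prove the cycle $(1)\Rightarrow(3)\Rightarrow(2)\Rightarrow(1)$, leaning on the order-theoretic dictionary already built up: Theorem~\ref{alter} (which translates $M(g)\ge 0$ into $g\ge 0$ on some member of $Z_B[M]$), Theorem~\ref{next2strict} (its strict counterpart for maximal ideals), the finite intersection property of the $Z_B$-ultrafilter $Z_B[M]$, and the fact that $M$, being maximal, is absolutely convex, so that $|M(f)|=M(|f|)$ and $B_1(X)/M$ is a totally ordered field. Recall that ``$|M(f)|$ infinitely large'' means $|M(f)|>\Phi(n)=M(\mathbf n)$ for every $n\in\mathbb N$.

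Before starting the cycle I would record the one point that is not pure bookkeeping: for each $n\in\mathbb N$ the set $Z_n=\{x\in X:|f(x)|\ge n\}$ really is a member of $Z(B_1(X))$, because $Z_n=Z\bigl((|f|-\mathbf n)\wedge\mathbf 0\bigr)$ and $B_1(X)$ is a lattice-ordered ring, so $(|f|-\mathbf n)\wedge\mathbf 0\in B_1(X)$. Only after this is it legitimate to ask whether $Z_n\in Z_B[M]$.

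For $(1)\Rightarrow(3)$: assuming $M(|f|)=|M(f)|$ is infinitely large, we have $M(|f|-\mathbf n)=M(|f|)-M(\mathbf n)>0$ for every $n$, so Theorem~\ref{next2strict} applied to $|f|-\mathbf n$ yields $Z\in Z_B[M]$ with $|f|-n>0$, hence $|f|\ge n$, on $Z$; thus $Z\subseteq Z_n$ and, $Z_n$ being a zero set and $Z_B[M]$ a $Z_B$-filter, $Z_n\in Z_B[M]$. For $(3)\Rightarrow(2)$: given any $Z\in Z_B[M]$, if $f$ were bounded on $Z$, say $|f|\le m$ on $Z$, then $Z\cap Z_{m+1}=\emptyset$ while both sets lie in $Z_B[M]$, contradicting its finite intersection property; hence $f$ is unbounded on every member of $Z_B[M]$. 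For $(2)\Rightarrow(1)$: since $B_1(X)/M$ is totally ordered, for each $n$ either $M(|f|)>M(\mathbf n)$ or $M(|f|)\le M(\mathbf n)$; in the latter case $M(\mathbf n-|f|)\ge 0$, so Theorem~\ref{alter} gives $Z\in Z_B[M]$ with $|f|\le n$ on $Z$, contradicting $(2)$. Hence $M(|f|)>M(\mathbf n)$ for all $n$, and absolute convexity gives $|M(f)|=M(|f|)$, so $|M(f)|$ is infinitely large.

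I do not expect a genuine obstacle here; the whole argument is a careful application of earlier results. The single spot demanding attention is the preliminary identification of $Z_n$ as the zero set of $(|f|-\mathbf n)\wedge\mathbf 0$, since without knowing $Z_n\in Z(B_1(X))$ one cannot invoke the $Z_B$-filter and ultrafilter properties in the first and second implications.
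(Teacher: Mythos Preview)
Your proof is correct, and the cycle $(1)\Rightarrow(3)\Rightarrow(2)\Rightarrow(1)$ works cleanly. The paper organises the argument differently: it first proves $(1)\Leftrightarrow(2)$ directly by the contrapositive (``$|M(f)|$ is not infinitely large iff $|f|\le n$ on some $Z\in Z_B[M]$''), then obtains $(2)\Rightarrow(3)$ by invoking the \emph{ultrafilter criterion}---since $f$ is unbounded on every member of $Z_B[M]$, each $Z_n$ meets every member, hence lies in the $Z_B$-ultrafilter $Z_B[M]$---and closes with $(3)\Rightarrow(1)$ via Theorem~\ref{alter}. So the paper exploits maximality through the ``meets-every-member'' characterisation of ultrafilters, while you exploit it through Theorem~\ref{next2strict}; both are legitimate and equally short. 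Your route has the minor advantage that you explicitly verify $Z_n=Z\bigl((|f|-\mathbf n)\wedge\mathbf 0\bigr)\in Z(B_1(X))$, a point the paper passes over in silence but which is indeed needed before any filter argument can be applied.
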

\begin{proof}
	$(1) \iff (2)$: $|M(f)|$ is not infinitely large in $B_1(X)/M$ iff $\exists$ $n \in \mathbb{N}$ such that, $|M(f)|=M(|f|) \leq M(\mathbf{n})$ iff $|f| \leq \mathbf{n}$ on some $Z \in Z_B[M]$ if and only if $f$ is bounded on some $Z \in Z_B[M]$. \\
	$(2) \implies (3)$: Choose $n \in \mathbb{N}$, we shall show that $Z_n \in Z_B[M]$. By $(2)$, $Z_n$ intersects each member in $Z_B[M]$. Now $Z_B[M]$ being a $Z_B$-ultrafilter, $Z_n \in Z_B[M]$.\\
	$(3) \implies (2)$: Let each $Z_n \in Z_B[M]$, for all $n \in \mathbb{N}$. Then for each $n \in \mathbb{N}$, $|f| \geq n$ on some zero set in $Z_B[M]$. Hence $|M(f)|=M(|f|) \geq M(\mathbf{n})$, for all $n \in \mathbb{N}$. That means $|M(f)|$ is infinitely large member in $B_1(X)/M$.
\end{proof}
\begin{theorem}
	$f \in B_1(X)$ is unbounded on $X$ if and only if there exists a maximal ideal $M$ in $B_1(X)$ such that $M(f)$ is infinitely large in $B_1(X)/M$.
\end{theorem}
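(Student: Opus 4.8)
The statement is an "iff", so I would prove the two directions separately, and the non-trivial direction is the forward one.

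The plan is to handle the easy direction first. Suppose there exists a maximal ideal $M$ in $B_1(X)$ with $M(f)$ infinitely large in $B_1(X)/M$. If $f$ were bounded on $X$, say $|f| \leq \mathbf{n}$ on all of $X$ for some $n \in \mathbb{N}$, then since $X \in Z_B[M]$, applying Theorem~\ref{3eqv} (the equivalence $(1)\iff(2)$, or directly $(2)$ fails) would contradict that $|M(f)|$ is infinitely large: $f$ would be bounded on the zero set $X \in Z_B[M]$. Hence $f$ is unbounded on $X$. (One should note $M(f)$ infinitely large forces $|M(f)|$ infinitely large, so Theorem~\ref{3eqv} applies.)

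For the forward direction, suppose $f \in B_1(X)$ is unbounded on $X$. The idea is to build a $Z_B$-filter witnessing the unboundedness and extend it to a $Z_B$-ultrafilter. For each $n \in \mathbb{N}$, set $Z_n = \{x \in X : |f(x)| \geq n\}$. Since $f \in B_1(X)$, the function $|f|$ is in $B_1(X)$, and each $Z_n$ is a zero set in $Z(B_1(X))$: indeed $Z_n = Z\big((|f| - \mathbf{n}) \wedge \mathbf{0}\big)$, since $(|f|-\mathbf n)\wedge\mathbf 0$ vanishes exactly where $|f|\geq n$, and $B_1(X)$ is a lattice. The family $\{Z_n : n \in \mathbb{N}\}$ is a decreasing chain, $Z_1 \supseteq Z_2 \supseteq \cdots$, and each $Z_n$ is nonempty precisely because $f$ is unbounded on $X$. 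Hence $\{Z_n\}$ has the finite intersection property, so it extends to a $Z_B$-ultrafilter $\mathscr{U}$ on $X$. Let $M = Z_B^{-1}[\mathscr{U}]$, which is a maximal ideal of $B_1(X)$ with $Z_B[M] = \mathscr{U}$. Then every $Z_n \in Z_B[M]$, so condition $(3)$ of Theorem~\ref{3eqv} holds, and therefore $|M(f)|$ — hence $M(f)$ up to sign — is infinitely large in $B_1(X)/M$. Replacing $f$ by $-f$ if necessary (which only changes $M(f)$ to $-M(f)$, still infinitely large, and this is a cosmetic point since "infinitely large" for $M(f)$ should be read via $|M(f)|$), we are done.

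The main obstacle, such as it is, is the verification that each $Z_n$ is genuinely a member of $Z(B_1(X))$ — i.e., exhibiting an explicit Baire one function whose zero set is $Z_n$ — and confirming that the finite intersection property of $\{Z_n\}$ is exactly equivalent to the unboundedness of $f$ on $X$. Both are routine once one recalls that $B_1(X)$ is a lattice-ordered ring (so $|f|, (|f|-\mathbf n)\wedge \mathbf 0 \in B_1(X)$) and that a decreasing chain of nonempty sets trivially has the finite intersection property. After that, the construction of $M$ via Zorn's lemma (extend to a $Z_B$-ultrafilter) and the appeal to Theorem~\ref{3eqv} are immediate, so there is no real difficulty beyond bookkeeping.
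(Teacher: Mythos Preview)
Your proposal is correct and follows essentially the same route as the paper: in the forward direction you show each $Z_n$ is nonempty by unboundedness, use the finite intersection property to extend $\{Z_n\}$ to a $Z_B$-ultrafilter, take the corresponding maximal ideal, and invoke Theorem~\ref{3eqv}(3); for the converse you appeal to Theorem~\ref{3eqv} with $X\in Z_B[M]$. You supply more detail than the paper (e.g., an explicit Baire one function with zero set $Z_n$, and the remark about $M(f)$ versus $|M(f)|$), but the argument is the same.
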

\begin{proof}
	Let $f$ be unbounded on $X$. So, each $Z_n$ in Theorem \ref{3eqv} is non-empty. We observe that $\{Z_n: n \in \mathbb{N}\}$ is a subcollection of $Z(B_1(X))$ having finite intersection property. So there exists a $Z_B$-ultrafilter $\mathscr U$ on $X$ such that $\{Z_n: n \in \mathbb{N}\} \subseteq \mathscr U$. Therefore, there is a maximal ideal $M$ in $B_1(X)$ for which $\mathscr = Z_B[M]$ and so, $Z_n \in Z_B[M]$, for all $n \in \mathbb{N}$. By Theorem \ref{3eqv} $M(f)$ is infinitely large.\\
	Converse part is a consequence of $(1) \implies (2)$ of Theorem \ref{3eqv}.
\end{proof}
\begin{corollary}
	If a completely Hausdorff space $X$ is not totally disconnected then there exists a hyper-real maximal ideal $M$ in $B_1(X)$.
\end{corollary}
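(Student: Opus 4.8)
\emph{Proof proposal.} The plan is to exhibit a single unbounded function $f \in B_1(X)$; the corollary then follows at once from the theorem immediately preceding it (equivalently, from the corollary characterising hyper-real maximal ideals by the presence of an infinitely large element in the residue field).

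First I would unpack the hypotheses. Since $X$ is not totally disconnected, there is a connected subset $C \subseteq X$ containing two distinct points $a,b$. Because $X$ is completely Hausdorff, choose a continuous $g:X \rightarrow \mathbb{R}$ with $g(a) \neq g(b)$. Then $g(C)$ is a connected subset of $\mathbb{R}$ containing the two distinct points $g(a),g(b)$, hence a nondegenerate interval $J$. Fix $t_1 < t_2$ in $J$ and put $c = \tfrac{1}{2}(t_1+t_2)$; every point of $(t_1,t_2)$ lies in $J$, so $c$ is approached from both sides by points of $J \setminus \{c\}$.

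Next I would build the function. Define $h:\mathbb{R} \rightarrow \mathbb{R}$ by $h(c)=0$ and $h(t)=\tfrac{1}{t-c}$ for $t \neq c$, and set $f := h \circ g$. To see that $f \in B_1(X)$ I would argue straight from the definition of a Baire one function: the functions $h_n(t) = \dfrac{t-c}{(t-c)^2 + 1/n}$ are continuous on $\mathbb{R}$ and converge pointwise to $h$, so $h_n \circ g \in C(X)$ and $h_n \circ g \to h \circ g$ pointwise, whence $f \in B_1(X)$. (If the paper or \cite{AA} already records that the composite of a Baire one function with a continuous function is Baire one, I would simply cite that instead.) To see that $f$ is unbounded on $X$, pick $s_k \in (t_1,t_2)$ with $s_k \neq c$ and $s_k \to c$; since $s_k \in J = g(C)$ there is $x_k \in C$ with $g(x_k) = s_k$, and then $|f(x_k)| = 1/|s_k-c| \to \infty$.

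Finally I would invoke the preceding theorem: as $f \in B_1(X)$ is unbounded on $X$, there is a maximal ideal $M$ of $B_1(X)$ with $M(f)$ infinitely large in $B_1(X)/M$; by the corollary characterising hyper-real maximal ideals (or directly, since $\Phi(\mathbb{R})$ is archimedean and hence cannot contain an infinitely large element, so $B_1(X)/M \neq \Phi(\mathbb{R})$), the ideal $M$ is hyper-real. The only step that I expect to need genuine care is the verification that $f = h \circ g$ really lies in $B_1(X)$ — that is, that a function with a single discontinuity, after composition with a continuous map, yields an honest Baire one function on the possibly badly behaved space $X$; the explicit sequence $h_n$ handles this while avoiding any appeal to the Lebesgue--Hausdorff theorem. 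Extracting a nondegenerate interval, choosing the point $c$, and checking unboundedness are all routine.
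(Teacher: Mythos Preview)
Your argument is correct and follows the paper's strategy exactly: reduce to producing an unbounded $f \in B_1(X)$ and then invoke the preceding theorem to obtain a maximal ideal with an infinitely large element. The only difference is that the paper's two-line proof cites \cite{AA} for the existence of such an unbounded Baire one function on a completely Hausdorff, non--totally-disconnected space, whereas you supply the construction explicitly via $f = h \circ g$ with the approximating sequence $h_n$; your version is thus more self-contained but not a different route.
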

\begin{proof}
	It is enough to prove that there exists an unbounded Baire one function in $B_1(X)$. We know that if a completely Hausdorff space is not totally disconnected, then there always exists an unbounded Baire one function \cite{AA}.
\end{proof}
\noindent In the next theorem we characterize the real maximal ideals of $B_1(X)$.
\begin{theorem}
	For the maximal ideal $M$ of $B_1(X)$ the following statements are equivalent:
	\begin{enumerate}
		\item $M$ is a real maximal ideal.
		\item $Z_B[M]$ is closed under countable intersection.
		\item $Z_B[M]$ has countable intersection property.
	\end{enumerate}
\end{theorem}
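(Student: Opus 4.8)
The plan is to establish the cycle $(1)\Rightarrow(2)\Rightarrow(3)\Rightarrow(1)$. The one structural fact I would isolate first is that $Z(B_1(X))$ is closed under countable intersections: given $f_n\in B_1(X)$, each $\frac{|f_n|}{1+|f_n|}$ lies in $B_1(X)$ because $\frac{1}{1+|f_n|}$ is a positive unit of $B_1(X)$ (\cite{AA}); the series $g=\sum_{n\ge 1}2^{-n}\frac{|f_n|}{1+|f_n|}$ converges uniformly, hence $g\in B_1(X)$ (recall $B_1(X)$ is closed under uniform limits, \cite{AA}); and $Z(g)=\bigcap_{n}Z(f_n)$. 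This identity is what lets a countable intersection of members of $Z_B[M]$ be treated as a zero set at all.

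For $(1)\Rightarrow(2)$, take $\{Z_n=Z(f_n)\}\subseteq Z_B[M]$ and form $g$ as above, so that $\bigcap_n Z_n=Z(g)$; since $M$ is a $Z_B$-ideal it suffices to prove $g\in M$. Put $s_n=\sum_{k=1}^{n}2^{-k}\frac{|f_k|}{1+|f_k|}$. As every maximal ideal of $B_1(X)$ is absolutely convex we have $|f_k|\in M$, hence $s_n\in M$ and $M(g)=M(g-s_n)$. From $0\le g-s_n\le \mathbf{2^{-n}}$ on $X$ and the description of the order on $B_1(X)/M$ (Theorems \ref{orderdef} and \ref{equiv}) we get $0\le M(g)\le M(\mathbf{2^{-n}})=\Phi(2^{-n})$ for every $n$. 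Since $M$ is real, $\Phi$ is an isomorphism of $B_1(X)/M$ onto the archimedean field $\mathbb R$, so $\Phi(2^{-n})\to 0$ forces $M(g)=0$. Therefore $\bigcap_n Z_n=Z(g)\in Z_B[M]$.

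The implication $(2)\Rightarrow(3)$ is immediate: if $Z_B[M]$ is closed under countable intersections, the intersection of any countable subfamily lies in $Z_B[M]$, and $\emptyset\notin Z_B[M]$, so that intersection is nonempty. For $(3)\Rightarrow(1)$ I would argue by contraposition: if $M$ is hyper-real then $B_1(X)/M$ is non-archimedean, so by the corollary characterizing hyper-real ideals there is an $f\in B_1(X)$ with $|M(f)|$ infinitely large; Theorem \ref{3eqv} then yields $Z_n=\{x\in X:|f(x)|\ge n\}\in Z_B[M]$ for every $n$, while $\bigcap_n Z_n=\emptyset$ because $f$ is real-valued. Hence $Z_B[M]$ fails the countable intersection property.

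The only non-formal ingredient is the preliminary closure fact — that $B_1(X)$ is closed under uniform limits, so that countable intersections of zero sets remain inside $Z(B_1(X))$; I would quote this from \cite{AA}. Everything else is bookkeeping with the order structure of $B_1(X)/M$ (Theorems \ref{orderdef}, \ref{equiv}, \ref{alter}, \ref{strict}), absolute convexity of maximal ideals, and Theorem \ref{3eqv}. Accordingly the place to be careful is the squeeze $0\le M(g)\le\Phi(2^{-n})$ in $(1)\Rightarrow(2)$, where realness of $M$ enters essentially.
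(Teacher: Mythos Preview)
Your proof is correct and follows essentially the same route as the paper. Both arguments build a Baire one function whose zero set equals the countable intersection (the paper uses $\sum(|f_n|\wedge 4^{-n})$, you use $\sum 2^{-n}\frac{|f_n|}{1+|f_n|}$), then exploit the archimedean nature of a real residue field; the only cosmetic difference is that the paper runs $(1)\Rightarrow(2)$ contrapositively (exhibiting an infinitely small element when the intersection escapes $Z_B[M]$) while you argue directly that $M(g)=0$, and $(3)\Rightarrow(1)$ is identical in both.
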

\begin{proof}
	$(1) \implies (2)$: Assume that $(2)$ is false, i.e. there exists a sequence of functions $\{f_n\}$ in $M$ for which $\bigcap\limits_{n=1}^\infty Z(f_n) \notin Z_B[M]$. Set $f(x)=\sum\limits_{n=1}^\infty \big(|f_n(x)| \wedge \frac{1}{4^n}\big)$, $\forall x\in X$. It is clear that, the function $f$ defined on $X$ is actually a Baire one function (\cite{AA}) and $Z(f)= \bigcap\limits_{n=1}^\infty Z(f_n)$. Thus, $Z(f) \notin Z_B[M]$. Hence $f \notin M \implies M(f) >0$ in $B_1(X)/M$.\\ Fix a natural number $m$. Then $Z(f_1) \bigcap Z(f_2)\bigcap Z(f_3)...\bigcap Z(f_m)=Z$(say) $\in Z_B[M]$. Now for any point $x \in Z$, $f(x)= \sum\limits_{n=m+1}^\infty\big(|f_n(x)| \wedge \frac{1}{4^n}\big) \leq \sum\limits_{n=m+1}^\infty \frac{1}{4^n}= 3^{-1} 4^{-m}$. This shows that, $0 < M(f) \leq M(3^{-1}4^{-m})$, $\forall$ $m \in \mathbb{N}$. Hence $M(f)$ is an infinitely small member in $B_1(X)/M$. So, $M$ becomes a hyper-real maximal ideal and then $(1)$ is false.\\
	$(2) \implies (3)$: Trivial, as $\emptyset \notin Z_B[M]$.\\
	$(3) \implies (1)$: Assume that $(1)$ is false, i.e. $M$ is hyper-real. So, there exists $f \in B_1(X)$ so that $|M(f)|$ is infinitely large in $B_1(X)/M$. Therefore for each $n \in \mathbb{N}$, $Z_n$ defined in Theorem \ref{3eqv}, belongs to $Z_B[M]$. Since $\mathbb{R}$ is archimedean, we have $\bigcap \limits_{n=1}^\infty Z_n = \emptyset$. Thus $(3)$ is false.
\end{proof}
\noindent So far we have seen that for any topological space $X$, all fixed maximal ideals of $B_1(X)$ are real. Though the converse is not assured in general, we show in the next example that in $B_1(\mathbb{R})$ a maximal ideal is real if and only if it is fixed.
\begin{example}
	Suppose $M$ is any real maximal ideal in $B_1(\mathbb{R})$. We claim that $M$ is fixed. The identity $i: \mathbb{R} \rightarrow \mathbb{R}$ belongs to $B_1(\mathbb{R})$. Since $M$ is a real maximal ideal, there exists a real number $r$ such that $M(i)=M(\mathbf{r})$. This implies $i - \mathbf{r} \in M$. Hence $Z(i-\mathbf{r})\in Z_B[M]$. But $Z(i-\mathbf{r})$ is a singleton. So, $Z_B[M]$ is fixed, i.e., $M$ is fixed.
\end{example} 
\noindent In view of Observation~\ref{Obs}(3), we conclude that a maximal ideal $M$ in $B_1(\mathbb{R})$ is real if and only if there exists a unique $p \in \mathbb{R}$ such that $\chi_p - 1 \in M$.\\\\
\noindent If $X$ is a P-space then $C(X)$ possesses real free maximal ideals. In such case however, $B_1(X) = C(X)$.  Consequently, $B_1(X)$ possesses real free maximal ideals, when $X$ is a P-space. It is still a natural question, what are the topological spaces $X$ for which $B_1(X)$ ($\supseteq C(X)$) contains a free real maximal ideal?

\end{document}